\numberwithin{equation}{section}
\newtheorem{theorem}[equation]{Theorem}
\newtheorem{cor}[equation]{Corollary}
\newtheorem{lemma}[equation]{Lemma}
\newtheorem{Remark}[equation]{Remark}
\newtheorem{definition}[equation]{Definition}
\newtheorem{con}[equation]{Conjecture}
\numberwithin{equation}{section}
\def\supp{{\rm supp}}
\let \e=\varepsilon
\def\0{{\rm \bf{0}}}
\def\q{{\vec{q}}}
\begin{document}

\title{Weighted mixed weak-type inequalities for multilinear operators}

\author{Kangwei Li}
\address[K. Li]{BCAM, Basque Center for Applied Mathematics, Mazarredo, 14. 48009 Bilbao Basque Country, Spain}
\email{kli@bcamath.org}

\author{Sheldy J. Ombrosi}
\address[S. Ombrosi]{Departamento de Matem\'atica\\
Universidad Nacional del Sur\\
Bah\'ia Blanca, 8000, Argentina}\email{sombrosi@uns.edu.ar}

\author{M. Bel\'en Picardi}
\address[B. Picardi]{Departamento de Matem\'atica\\
Universidad Nacional del Sur\\
Bah\'ia Blanca, 8000, Argentina}\email{belen.picardi@uns.edu.ar}

\begin{abstract}
In this paper we present a theorem that generalizes Sawyer's classic result about mixed weighted inequalities to the multilinear context. Let $\vec{w}=(w_1,...,w_m)$ and $\nu = w_1^\frac{1}{m}...w_m^\frac{1}{m}$, the main result of the paper sentences that under different conditions on the weights we can obtain 
$$\Bigg\| \frac{T(\vec f\,)(x)}{v}\Bigg\|_{L^{\frac{1}{m}, \infty}(\nu v^\frac{1}{m})} \leq C \ \prod_{i=1}^m{\|f_i\|_{L^1(w_i)}},
$$
where $T$ is a multilinear Calder\'on-Zygmund operator. To obtain this result we first prove it for the $m$-fold product of the Hardy-Littlewood maximal operator $M$,  and also for $\mathcal{M}(\vec{f})(x)$: the multi(sub)linear maximal function introduced in \cite{LOPTT}. 

As an application we also prove a vector-valued extension to the mixed weighted weak-type inequalities of multilinear Calder\'on-Zygmund operators.

\end{abstract}

\keywords{mixed weighted inequalities, multilinear operators.}

\thanks{K.L. is supported by Juan de la Cierva-Formaci\'on 2015 FJCI-2015-24547, the Basque Government through the BERC 2014-2017 program and
 Spanish Ministry of Economy and Competitiveness MINECO: BCAM Severo Ochoa excellence accreditation SEV-2013-0323. S. O. and B. P. are supported by CONICET PIP 11220130100329CO, Argentina.}

\maketitle

\section{Introduction}
In 1985, E. Sawyer \cite{S} proved the following mixed weak-type inequality:

\begin{theorem}\label{sawyer}
If $u,v \in A_1$, then there is a constant $C$ such that for all $t>0$,
$$uv\Big(\Big\{ x \in \mathbb{R}  :\; \frac{M(fv)(x)}{v(x)} \; > t \Big\}\Big)
 \; \leq \; \frac{C}{t} \int_{\mathbb{R}} |f(x)|u(x)v(x)\,dx.$$
\end{theorem}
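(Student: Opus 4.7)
My plan is to reduce to a dyadic version of the maximal operator and pass to an auxiliary maximal operator adapted to the weight $v$. By the standard comparison of $M$ with finitely many shifted dyadic maximal operators, it suffices to prove the inequality for the dyadic operator $M^d$. I then introduce
\[
M^d_v f(x) := \sup_{Q \ni x} \frac{1}{v(Q)} \int_Q |f|\, v\, dy,
\]
which is the dyadic Hardy--Littlewood maximal function with respect to the measure $v\,dx$. The pointwise $A_1$ estimate $v(Q)/|Q| \le [v]_{A_1} v(x)$ a.e.\ on $Q$ yields the comparison $M^d(fv)/v \le [v]_{A_1} M^d_v f$, reducing the problem to controlling $(uv)(\{M^d_v f > \lambda\})$.

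For this, I apply a Calder\'on--Zygmund stopping-time argument: after a standard approximation reducing to bounded, compactly supported $f$, the level set $\{M^d_v f > \lambda\}$ equals the disjoint union of the maximal dyadic cubes $\{Q_j\}$ satisfying $\int_{Q_j} |f|v > \lambda\, v(Q_j)$. The theorem then follows by summing the per-cube estimate
\[
(uv)(Q_j) \le \frac{C}{\lambda} \int_{Q_j} |f|\,uv,
\]
which, once combined with the stopping condition $v(Q_j) < \lambda^{-1}\int_{Q_j}|f|v$ and the trivial bound $\inf_{Q_j} u \cdot \int_{Q_j} |f| v \le \int_{Q_j} |f|uv$, reduces to the bilinear estimate
\[
\frac{1}{v(Q_j)} \int_{Q_j} uv \le C\, \inf_{Q_j} u.
\]

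The main obstacle is this last bilinear inequality, which expresses a joint regularity of $u$ against the measure $v\,dx$ (morally, ``$u\in A_1(v\,dx)$''). A naive H\"older split followed by individual reverse-H\"older self-improvement of $u$ and $v$ does not suffice, since the dual exponents fail to be compatible when $[u]_{A_1}[v]_{A_1}$ is large. The estimate must instead exploit both $A_1$ hypotheses jointly; a convenient route is the Jones factorization $A_2 = A_1\cdot A_1^{-1}$ (so that $uv^{-1}\in A_2$), paired with a Rubio de Francia iteration that produces a controlled majorant of $u$ behaving well against $v\,dx$. This is the technical heart of Sawyer's result and the one step where the $A_1$ assumption on \emph{both} weights is genuinely used --- a single $A_1$ weight does not suffice. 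Once the bilinear bound is in hand, summing over the disjoint stopping cubes closes the argument.
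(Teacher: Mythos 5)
The statement you are proving is Sawyer's 1985 theorem, which the paper only quotes (citing \cite{S}); it gives no proof of its own, so your argument has to stand on its own. It does not: the reduction you make in the first step already destroys the theorem. The pointwise bound $M(fv)(x)/v(x)\le [v]_{A_1}M^d_vf(x)$ is correct, but it is fatally lossy, because the operator $M_v$ is \emph{not} of weak type $(1,1)$ with respect to $uv\,dx$ for general $u,v\in A_1$. Equivalently, the ``bilinear estimate''
\[
\frac{1}{v(Q)}\int_Q uv \;\le\; C\,\inf_Q u ,
\]
i.e.\ $u\in A_1(v\,dx)$, which you correctly identify as the step your argument hinges on, is simply false under the hypothesis $u,v\in A_1$. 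Take $u(x)=v(x)=|x|^{-3/4}$ on $\mathbb{R}$: both are $A_1$ weights, yet $uv=|x|^{-3/2}$ is not even locally integrable, so $\int_Q uv=\infty$ for any interval containing the origin while $\inf_Q u\cdot v(Q)<\infty$. With $f=\chi_{[2,3]}$ one checks directly that $(uv)(\{M_vf>\lambda\})=\infty$ for small $\lambda$ while $\int|f|uv<\infty$, so no per-cube estimate over the stopping cubes of $M^d_vf$ can ever close the argument. No factorization or Rubio de Francia iteration will rescue a false inequality; the appeal to ``Jones factorization paired with a Rubio de Francia iteration'' at the technical heart is a gesture at machinery, not a proof.

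The reason Sawyer's theorem is nevertheless true is visible in the same example: near the origin $v(x)$ is huge, so $M(fv)(x)/v(x)$ is small there and the level set $\{M(fv)/v>t\}$ \emph{excludes} a neighborhood of the origin, exactly the region where $uv$ fails to be integrable. Your majorization by $M_vf$ discards this division by $v(x)$ at the point $x$, which is the whole content of the theorem. Any correct proof (Sawyer's original one, or the $\mathbb{R}^n$ extension in \cite{CMP}, or the arguments of \cite{LOP} echoed in Section 3 of this paper) must keep the comparison between $v(x)$ and the averages of $v$ over the stopping cubes, e.g.\ by stratifying the cubes according to the relative size of $\langle v\rangle_Q$ and the values of $v$ on the relevant part of $Q$, and extracting exponential decay across the strata. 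As written, your proposal reduces the theorem to a false lemma and therefore has an unfixable gap in its present form.
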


In the same work E. Sawyer conjectured that the previous theorem is valid if the maximal operator is replaced by the Hilbert transform. In 2005, D. Cruz-Uribe, J. M. Martell and C. P\'erez \cite{CMP} extended this result to $\mathbb{R}^n$. Furthermore, they proved it for Calder\'on-Zygmund operators, solving this Sawyer's conjecture.

\begin{theorem}[\cite{CMP}]\label{cmpteo}
If $u,v \in A_1$, or $u\in A_{1}$ and $uv\in A_{\infty}$, then there is a constant $C$ such that for all $t>0$,
$$uv\Bigg(\Big\{ x \in \mathbb{R}^n  :\; \frac{|T(fv)(x)|}{v(x)} \; > t \Big\}\Bigg)
 \; \leq \; \frac{C}{t} \int_{\mathbb{R}^n} |f(x)|u(x)v(x)\,dx,$$ where $T$ is Hardy-Littlewood maximal function or any Calder\'on-Zygmund operator.
\end{theorem}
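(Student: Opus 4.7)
The plan is to reduce the assertion to the substitution $g=fv$ and estimate
\[
uv\bigl(\{|Tg|>tv\}\bigr)\le \frac{C}{t}\int_{\mathbb{R}^n}|g|\,u\,dx,
\]
treating the Hardy-Littlewood maximal function and a Calderón-Zygmund operator in sequence. A first observation that unifies the two hypotheses: if $u,v\in A_1$ then the product $uv$ lies in $A_2\subset A_\infty$, so in either case one has $uv\in A_\infty$ and may exploit the corresponding reverse Hölder inequality throughout.

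For the Hardy-Littlewood maximal function under the stronger assumption $u,v\in A_1$, I would mimic Sawyer's original stopping-time argument adapted to $\mathbb{R}^n$: at level $t$, extract the maximal dyadic cubes $\{Q_j\}$ whose average of $g$ exceeds $\alpha t$ times the average of $v$ on $Q_j$, for a small absolute constant $\alpha>0$. The $A_1$ property of $v$ upgrades this averaged stopping condition into the pointwise inclusion $\{Mg>tv\}\subset \bigcup 2Q_j$ up to a null set, and then the $A_1$ property of $u$ together with the stopping inequality yields $uv(\bigcup 2Q_j)\le C t^{-1}\int |g|u$. Under the weaker hypothesis $u\in A_1$, $uv\in A_\infty$, the $A_1$-based pointwise-to-average comparison for $v$ is no longer available, so I would argue via a Rubio de Francia-type extrapolation starting from strong $L^p(uv)$ bounds available from $uv\in A_\infty$, or via a sharp-maximal-function detour that only uses $A_\infty$ reverse Hölder on $uv$.

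For the Calderón-Zygmund operator, I would deduce the bound from the maximal one via a good-$\lambda$ inequality of Coifman-Fefferman type: for $w\in A_\infty$ there exist $\epsilon,C>0$ such that
\[
w\bigl(\{|Tg|>2\lambda,\;Mg\le\gamma\lambda\}\bigr)\le C\gamma^\epsilon\, w\bigl(\{|Tg|>\lambda\}\bigr)
\]
for every $\lambda>0$ and sufficiently small $\gamma$. Applied with $w=uv\in A_\infty$, the issue is to promote this from a constant level $\lambda$ to the \emph{variable} threshold $tv(x)$ on which the desired distributional estimate is phrased. The natural remedy is to localize on a Whitney decomposition of $\{|Tg|>tv\}$, on whose cubes the $A_1$ or the $A_\infty$ structure forces $v$ to be essentially constant, run the good-$\lambda$ argument on each piece with $\lambda$ comparable to $tv$, and then sum; combined with the previous step this closes the Calderón-Zygmund case.

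The chief obstacle I anticipate is precisely this passage from a constant height $\lambda$ to the pointwise height $tv(x)$, both in the stopping-time decomposition for the maximal case and in the good-$\lambda$ step for Calderón-Zygmund operators: the variable threshold interferes with the usual Whitney/CZ machinery, and under the weaker hypothesis one must make the same argument work using only $uv\in A_\infty$ instead of $v\in A_1$, which removes the pointwise tool most convenient for taming $v(x)$.
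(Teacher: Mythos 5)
There is a genuine gap, and it occurs at the very first step. Your ``unifying observation'' that $u,v\in A_1$ forces $uv\in A_2\subset A_\infty$ is false: the factorization theorem produces \emph{quotients} ($w_1w_2^{1-p}\in A_p$ for $w_1,w_2\in A_1$), not products, and the product of two $A_1$ weights need not belong to $A_\infty$ — it need not even be locally integrable (take $u=v=|x|^{-3n/4}$ in $\mathbb R^n$, each in $A_1$, with $uv=|x|^{-3n/2}$ non-integrable near the origin). This is precisely why the theorem lists $u,v\in A_1$ and $u\in A_1,\ uv\in A_\infty$ as two genuinely independent hypotheses. Everything in your plan that ``exploits the reverse H\"older inequality of $uv$ throughout'' therefore collapses in the first case.

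The second problem is the step you yourself flag as the chief obstacle: promoting the good-$\lambda$ inequality from a constant level $\lambda$ to the variable threshold $tv(x)$. Your proposed remedy — Whitney cubes on which ``the $A_1$ or $A_\infty$ structure forces $v$ to be essentially constant'' — does not work: an $A_1$ weight satisfies $\langle v\rangle_Q\le C\operatorname{ess\,inf}_Q v$, which controls the average by the infimum but in no way pins $v$ between two comparable constants on $Q$, and $A_\infty$ gives even less. This is exactly the known obstruction that led Cruz-Uribe, Martell and P\'erez to abandon the good-$\lambda$ route altogether. Their actual argument — which the present paper reproduces in Appendix A for the multilinear analogue (Theorem \ref{extension}) — takes a small power $1/r$ of $|T(fv)/v|$, dualizes $L^{r,\infty}(uv)$ against $L^{r',1}(uv)$, runs the Rubio de Francia algorithm on the dual function $h$ to produce a majorant $\mathcal Rh$ with $\mathcal Rh\,u\in A_1$ (hence an $A_\infty$ weight after absorbing the appropriate power of $v$), and then invokes the Coifman--Fefferman estimate $\int |Tg|^s w\le C\int (Mg)^s w$ for $w\in A_\infty$ and small $s$, reducing matters to the maximal function. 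Your outline for the maximal function under $u,v\in A_1$ (Sawyer-type stopping cubes) is the right idea for that sub-case, but as written the proposal neither proves the maximal-function estimate under the second hypothesis nor closes the Calder\'on--Zygmund case under either.
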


Quantitative estimates of these mixed weighted results can be found in \cite{OPR}. Moreover, it was conjectured in \cite{CMP}  that the conclusion of the previous theorem still holds if a weaker and more general hypothesis is satisfied. That is, if we have the following conditions on the weights, $u\in A_{1}$ and $v\in A_{\infty}$. Recently, in \cite{LOP} the first two authors and C. P\'erez solved  such conjecture. Namely, the following theorem was proved, which constitutes the more difficult case of this class of mixed weighted inequalities.

\begin{theorem}[\cite{LOP}]\label{LOPinf}
Let $T$ be a Calder\'on-Zygmund operator or the Hardy-Littlewood maximal operator and let $u\in A_1$ and $v\in A_{\infty}$. Then there is a finite constant $C$ depending on the $A_1$ constant of $u$ and the $A_{\infty}$ constant of $v$ such that
\begin{equation} \label{a1ainf}
\ \Big\| \frac{T(fv)}{v}\Big\|_{L^{1, \infty}(uv)} \leq C \|f\|_{L^1(uv)}. 
\end{equation}
\end{theorem}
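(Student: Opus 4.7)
The plan is to prove the estimate first for $T=M$ (the heart of the matter) and then transfer it to Calder\'on--Zygmund operators by sparse domination. In both phases the scheme is the same: linearize the weak-type norm, run a stopping-time / Calder\'on--Zygmund decomposition adapted to the weight $v$, and close the estimate using the $A_1$ property of $u$ together with the reverse H\"older inequality guaranteed by $v\in A_\infty$.

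For the maximal function, fix $t>0$ and consider $\O_t=\{x\in\mathbb{R}^n:M(fv)(x)>tv(x)\}$. I would produce a family of maximal dyadic cubes $\{Q_j\}$ covering $\O_t$ on which the $v$-weighted average of $f$ exceeds (a fixed multiple of) $t$, i.e.\ $v(Q_j)^{-1}\int_{Q_j}fv\,dx\gtrsim t$. On each $Q_j$, the $A_1$ condition on $u$ gives $u(Q_j)\lesssim |Q_j|\inf_{Q_j}u$, while $v\in A_\infty$ yields $v\in RH_s$ for some $s>1$ depending only on $[v]_{A_\infty}$, so that one can compare $v(Q_j)/|Q_j|$ with weighted averages involving $uv$. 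Combining these via H\"older's inequality on each $Q_j$ should bound $uv(Q_j)$ by $\frac{C}{t}\int_{Q_j} f\,uv$, and summing over the (essentially disjoint) family $\{Q_j\}$ then gives the desired weak-type bound.

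For the Calder\'on--Zygmund case, I would use the pointwise sparse domination $|Tf(x)|\lesssim \sum_{Q\in\mathcal{S}}\ave{|f|}_Q\chi_Q(x)$ for a sparse family $\mathcal{S}$. The mixed weak-type estimate then reduces to proving the same bound for the sparse operator, for which one re-runs the decomposition: stop on the sparse cubes where the $v$-weighted average of $f$ is large, and use the sparseness condition $|E_Q|\geq \tfrac{1}{2}|Q|$ with $\{E_Q\}$ pairwise disjoint to sum the cube contributions just as in Phase~I.

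The main obstacle, and the reason this conjecture of \cite{CMP} resisted attack for more than a decade, is that $v\in A_\infty$ does \emph{not} give a pointwise bound $v(x)\lesssim \ave{v}_Q$; it only controls $v$ on average via reverse H\"older / John--Nirenberg. Consequently the original Sawyer-style argument (tailored to $u,v\in A_1$) breaks down, and any direct approach that wants to ``replace $v(x)$ by $\ave{v}_Q$'' is doomed. The resolution is to exploit $A_\infty$ only through its integral manifestations: work with $v$-weighted stopping times so that each selected cube automatically carries non-trivial $v$-mass, and then invoke $v\in RH_s$ to pass between $v$, Lebesgue measure and $uv$ on those cubes. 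Quantifying constants and ensuring that one never needs pointwise control of $v$ is the technical crux of the proof.
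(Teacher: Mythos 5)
This theorem is imported from \cite{LOP} and is not reproved in the present paper, but Section 3 runs the same scheme in the multilinear setting, so the intended argument is visible there. Measured against it, your proposal has a genuine gap at its very first step. You propose to cover $\Omega_t=\{M(fv)>tv\}$ by maximal dyadic cubes $Q_j$ on which $v(Q_j)^{-1}\int_{Q_j}fv\gtrsim t$. But the inequality $M(fv)(x)>tv(x)$ has a threshold varying with $x$, and passing from the unweighted stopping condition $|Q|^{-1}\int_Q fv>t\,v(x)$ to your $v$-weighted one requires precisely the pointwise comparison $\langle v\rangle_Q\lesssim v(x)$ that only $v\in A_1$ supplies. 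You correctly name this as the main obstacle, yet your construction tacitly uses it: without it there is no reason the selected family covers $\Omega_t$, nor that H\"older plus $RH_s$ closes the estimate on each cube. The actual proof is structurally different: by homogeneity one restricts to the annulus $\{1<M(fv)/v\le 2\}$, decomposes it further by the dyadic level of $v$ (the sets $E_k$ with $a^{k}<v\le a^{k+1}$), takes the maximal dyadic cubes $I_j^k$ of $\{M_d(fv)>a^{k}\}$ (constant thresholds, so the stopping time is legitimate), and then splits these cubes according to a discrepancy parameter $l$ comparing $\langle v\rangle_{I_j^k}$ with $a^{k}$. The heart of the matter is the pair of lemmas giving exponential decay in $|l|$ of $u(E_k\cap I_j^k)/u(I_j^k)$ --- one direction from $v\in A_\infty$, the other from the reverse-H\"older-type Lemma \ref{rh} --- followed by a principal-cubes/Carleson argument exploiting $[u]_{A_1}$. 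None of these ingredients appears in your sketch, and they are exactly what kept the conjecture open.

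For the Calder\'on--Zygmund case your sparse-domination reduction is reasonable in principle but does not reduce the difficulty: the sparse operator dominates $M(fv)$ itself, so ``re-running the decomposition'' on the sparse family meets the same obstruction. The route taken in \cite{LOP} (and in the Appendix here for the multilinear case) is instead a Rubio de Francia extrapolation argument combined with the Coifman--Fefferman estimate $\int|Tf|^{s}w\lesssim\int (Mf)^{s}w$ for $w\in A_\infty$, which transfers the maximal-function result to $T$ without any further cube decomposition.
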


\bigskip

On the other hand, the study of multilinear Calder\'on-Zygmund theory started in the seventies with the works of R. Coifman and Y. Meyer (\cite{CoMe1} and \cite{CoMe2}). However, a systematic treatment of this topic appears later with works of L. Grafakos and R. Torres \cite{GraTor1, GraTor2}. We recall the definition of a multilinear Calder\'on-Zygmund operator:
let $T:S({\mathbb R}^n)\times\dots\times S({\mathbb R}^n)\to
S'({\mathbb R}^n)$ be a multilinear operator initially defined on the $m$-fold
product of Schwartz spaces and taking values into the space of
tempered distributions;
we say that $T$ is an $m$-linear
Calder\'on-Zygmund operator if, for some $1\le q_1, \dots, q_m<\infty$ and $\frac{1}{m} \leq p < \infty$ satisfying $\frac{1}{p}=\frac{1}{q_1}+\dots+\frac{1}{q_m}$, it
extends to a bounded multilinear operator from
$L^{q_1}\times\dots\times L^{q_m}$ to $L^p$, and if
there exists a function
$K$ defined
off the diagonal $x=y_1=\dots=y_m$ in $({\mathbb
R}^n)^{m+1}$ satisfying the appropriate decay and smoothness conditions (see Page 5 in \cite{LOPTT} ) and such that
$$T(f_1,\dots,f_m)(x) =\int_{\mathbb R^n} \cdots \int_{\mathbb R^n}
K(x,y_1,\dots,y_m) \prod_{i=1}^m f_i(y_i)\, dy_1\cdots dy_m $$
for all $ x \notin \cap_{i=1}^{m} \supp f_{i}$.
Related to weighted estimates for these operators, the first result was obtained in \cite{GraTor2} (see also \cite{PerTor}) where the authors proved that, if $1 < q_1, \dots, q_m < \infty$ and $w$ is a weight in the Muckenhoupt $A_{q_0}$ class for $q_0=\min\{q_1, \dots, q_m\}$, an $m$-linear Calder\'on-Zygmund operator $T$ maps $L^{q_1}(w)\times\dots\times L^{q_m}(w)$ into $L^p(w)$.
 In \cite{LOPTT} Lerner et. al, developed the appropriate class of multiple weights for $m$-linear Calder\'on-Zygmund operators. Now, we recall some of those results in \cite{LOPTT} that will be useful for us along this paper.  Let  $1\le q_1, \dots , q_{m}<\infty$ and $\frac{1}{m} \leq p < \infty$ be such that $\frac{1}{p}=\frac{1}{q_1}+\dots+\frac{1}{q_m}$.
We say that $\vec w=(w_1,\dots,w_m)$ satisfies the \emph{multilinear $A_{\q}$ condition} if
\begin{equation}\label{multiap}
\sup_{Q}\Big(\frac{1}{|Q|}\int_Q\nu_{\vec
w}\Big)^{1/p}\prod_{i=1}^m\Big(\frac{1}{|Q|}\int_Q
w_i^{1-q'_i}\Big)^{1/q'_i}<\infty
\end{equation}
where the supremum is taken over all cubes $Q$
(when $q_i=1$, $\Big(\frac{1}{|Q|}\int_Q w_i^{1-q'_i}\Big)^{1/q'_i}$
is understood as $\displaystyle(\inf_Q w_i)^{-1}$). Now, if $\vec w$ satisfies the $A_{\q}$ condition and $1<q_1,\dots,q_m<\infty$, then an $m$-linear Calder\'on-Zygmund operator $T$ maps $L^{q_1}(w_1)\times \dots \times L^{q_m}(w_m) $ into $ L^{p}(\nu_{\vec w})$. If at least one $q_i=1$, then $T$ maps $L^{q_1}(w_1)\times \dots \times L^{q_m}(w_m) $ into $ L^{p,\infty}(\nu_{\vec w})$. It is shown that $\prod_{i=1}^m A_{q_i} \subseteq A_{\q}$ and that this inclusion is strict. Moreover, if $T$ is the $m$-linear Riesz transform, it was proved in \cite{LOPTT} that $A_{\q}$ is a necessary condition for such weighted estimate of $T$.

One of the key points in \cite{LOPTT} was the introduction of the multi(sub)linear maximal function $\mathcal{M}$ defined by $$\mathcal M(\vec f\,)(x)=\sup_{Q\ni x
}\prod_{i=1}^m\frac{1}{|Q|}\int_Q|f_i(y_i)|dy_i,$$ where $\vec{f}=(f_1,...,f_m)$ and the supremum is taken over all cubes $Q$ containing $x$.

This maximal operator is smaller than the product $\prod_{i=1}^m Mf_i$, which was the auxiliary operator used previously to estimate multilinear singular integral operators.

The aim of this paper is to obtain mixed weighted estimates that generalize Theorem \ref{LOPinf} to the multilinear context. We will investigate both $\prod_{i=1}^m Mf_i$ and $\mathcal{M}(\vec f)$ under different assumptions. Then by an extrapolation theorem  we can  also  prove mixed weighted inequalities for multilinear Calder\'on-Zygmund operators.        

The first result of this paper is the following:

\begin{theorem}\label{main}
Let $w_1,...,w_m \in A_1$ and $v\in A_{\infty}$. Denote $\nu = w_1^\frac{1}{m}...w_m^\frac{1}{m}$. Then,
$$\Bigg\| \frac{\prod_{i=1}^m Mf_i}{v}\Bigg\|_{L^{\frac{1}{m}, \infty}(\nu v^\frac{1}{m})} \leq C \prod_{i=1}^m{\|f_i\|_{L^{1}(w_i)}}.$$
\end{theorem}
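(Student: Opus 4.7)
By the identity $\|g\|_{L^{1/m,\infty}(\mu)} = \||g|^{1/m}\|_{L^{1,\infty}(\mu)}^m$, the theorem is equivalent to the level-set estimate
$$\nu v^{1/m}\Big(\{x : \prod_{i=1}^m Mf_i(x) > t\,v(x)\}\Big) \leq \frac{C}{t^{1/m}} \prod_{i=1}^m \|f_i\|_{L^1(w_i)}^{1/m}, \qquad t > 0.$$
Two weight-theoretic facts align the problem with Theorem \ref{LOPinf}: first, $\nu = \prod_i w_i^{1/m} \in A_1$, because multi-H\"older on cube averages gives the pointwise bound $M\nu(x) \leq \prod_i (Mw_i(x))^{1/m} \leq C\nu(x)$; second, $v^{1/m} \in A_\infty$, since $A_\infty$ is stable under positive powers.

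The natural first move is to split the level set multiplicatively. For any positive $\la_1,\dots,\la_m$ with $\prod_i \la_i = t$,
$$\{\prod\nolimits_i Mf_i > tv\} \subseteq \bigcup_{i=1}^m \{Mf_i > \la_i v^{1/m}\},$$
since the complementary inequality $Mf_i(x) \leq \la_i v(x)^{1/m}$ holding for every $i$ would force $\prod_i Mf_i(x) \leq tv(x)$. Applying Theorem \ref{LOPinf} to each set in the union, with $u = \nu$ and $A_\infty$ factor $v^{1/m}$, and then optimizing the $\la_i$ under the constraint $\prod_i\la_i = t$ (a Lagrange-multiplier computation in which the optimal choice is $\la_i\propto\|f_i\|_{L^1(\nu v^{1/m})}$ and produces the factor $m\,t^{-1/m}$), yields a preliminary estimate with $\prod_i \|f_i\|_{L^1(\nu v^{1/m})}^{1/m}$ on the right-hand side.

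The main obstacle is that this preliminary bound is too weak: it would require $\|f_i\|_{L^1(\nu v^{1/m})} \leq C\|f_i\|_{L^1(w_i)}$, which fails in general because $\nu v^{1/m}$ mixes all the $w_j$'s together with $v$. Closing this gap is where the substance of the argument lies. I expect the correct refinement to proceed via a Calder\'on--Zygmund decomposition of each $f_i$ at a suitable height, the pointwise $A_1$ estimate $Mf_i(x) \leq C w_i(x)^{-1} M(f_i w_i)(x)$ (which recouples each factor to its proper weight $w_i$), and the reverse H\"older property of $v \in A_\infty$ (used to transfer between the $v^{1/m}$ appearing in the measure and the $v$ appearing in the denominator). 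This bookkeeping parallels the scalar bootstrap of \cite{LOP}, now carried out factor by factor across the $m$-fold product, and I expect it to be the principal difficulty.
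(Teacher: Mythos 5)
Your reduction to the level-set estimate and the two preliminary observations ($\nu\in A_1$, $v^{1/m}\in A_\infty$) are fine, but the proof as written has a genuine gap: everything after ``I expect the correct refinement to proceed via\dots'' is a statement of intent, not an argument. The union bound with $u=\nu$ is, as you yourself note, structurally incapable of producing $\prod_i\|f_i\|_{L^1(w_i)}^{1/m}$, because it ties every $f_i$ to the mixed weight $\nu$ rather than to its own $w_i$, and no choice of the $\lambda_i$ can undo that. The sketched repair (Calder\'on--Zygmund decomposition, the pointwise bound $Mf_i\le Cw_i^{-1}M(f_iw_i)$, reverse H\"older for $v$) is never assembled into an estimate, and it does not address the real difficulty, which is to recouple each $f_i$ to $w_i$ inside a single measure $\nu v^{1/m}$ of one level set.

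The missing device is a choice of auxiliary weights that lets one invoke Theorem \ref{LOPinf} once per factor with the correct $A_1$ weight $w_i$. By homogeneity it suffices to bound $\nu v^{1/m}(E)$ for $E=\{v<\prod_iMf_i\le 2v\}$ (the full level set is a union of such slabs with geometrically decaying contributions). Set $v_i=v\prod_{j\ne i}(Mf_j)^{-1}$; since each $Mf_j$ is an $A_1$ weight, $(Mf_j)^{-1}\in RH_\infty$, and by Lemma \ref{pesos} the product of an $A_\infty$ weight with $RH_\infty$ weights is again in $A_\infty$, so $v_i\in A_\infty$. On $E$ one has $Mf_i\le 2v_i$ and $E\subseteq\{Mf_i>v_i\}$, whence by H\"older
\[
\nu v^{1/m}(E)\le\int_E\prod_{i=1}^m\bigl(Mf_i\,w_i\bigr)^{1/m}\le\prod_{i=1}^m\Bigl(\int_E Mf_i\,w_i\Bigr)^{1/m}\le 2\prod_{i=1}^m\Bigl(\int_{\{Mf_i>v_i\}}v_i\,w_i\Bigr)^{1/m},
\]
and each factor is controlled by $C\|f_i\|_{L^1(w_i)}^{1/m}$ upon applying Theorem \ref{LOPinf} to $f_i/v_i$ with the pair $(w_i,v_i)\in A_1\times A_\infty$. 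This is the paper's argument; without this (or an equivalent) mechanism your proposal does not close.
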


The particular case in which the weight $v=1$ in the theorem above was proved in \cite{LOPTT} (See  Theorem 3.12 there). Adding a non-constant function $v$ in the distribution function makes the proof more complicated. However, benefits from Theorem \ref{LOPinf} and the ideas in \cite{LOPTT}  allow us to obtain the result.  It is obvious that the conclusion in Theorem \ref{main} also holds for the maximal operator $\mathcal M$.

We will see below that as a consequence of Theorem \ref{main}, we can obtain the same result for multilinear Calder\'on-Zygmund operators. However, we know that $\prod_{i=1}^m Mf_i$ is too big to estimate multilinear Calder\'on-Zygmund operators. In fact, it was proved in \cite{LOPTT}  that the condition  $w_1,...,w_m \in A_1$ is stronger than $\vec{w}=(w_1,...,w_m) \in A_{\vec{1}}$. And since the last condition characterizes the weak type of a multilinear Calder\'on-Zygmund operator $T$ from 
$L^{1}(w_{1})\times \dots \times L^{1}(w_{m})$ into $L^{1/m,\infty}(\nu)$, it is natural to ask if it is possible to relax the hypothesis  $w_1,...,w_m \in A_1$  in Theorem \ref{main} if we put $T$ or $\mathcal M $ instead of $\prod_{i=1}^m Mf_i$.      
The next theorem gives a partially positive answer.  

\begin{theorem}\label{Max}

Let $\vec{w}=(w_1,...,w_m) \in A_{\vec{1}}$, $\nu = w_1^\frac{1}{m}...w_m^\frac{1}{m}$ and $v$ be a weight satisfying $\nu v^{\frac 1m}\in A_\infty$.  Then there is a constant $C$ such that
\begin{equation}\label{se111}
\Bigg\| \frac{\mathcal M(\vec f\,)(x)}{v}\Bigg\|_{L^{\frac{1}{m}, \infty}(\nu v^\frac{1}{m})} \leq C \ \prod_{i=1}^m{\|f_i\|_{L^1(w_i)}}.
\end{equation}
\end{theorem}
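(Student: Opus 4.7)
\medskip
\noindent\textbf{Proof proposal.}
I would begin with two structural reductions. First, taking $m$-th roots in the $A_{\vec 1}$ condition forces $\nu\in A_{1}$ (with $\nu(Q)/|Q|\le[\vec w]_{A_{\vec 1}}^{1/m}\prod_{k}(\inf_{Q}w_{k})^{1/m}\le[\vec w]_{A_{\vec 1}}^{1/m}\inf_{Q}\nu$), so both $\nu$ and $\nu v^{1/m}$ are $A_{\infty}$ weights. Second, by the standard $3^{n}$ shifted dyadic grids argument, it suffices to prove \eqref{se111} for the dyadic multilinear maximal operator $\mathcal{M}^{d}$ on a single dyadic grid.

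Fix $t>0$ and set $\Omega_{t}=\{x:\mathcal{M}^{d}(\vec f\,)(x)>tv(x)\}$. I would then run a Calder\'on--Zygmund-type stopping-time procedure adapted to $v$: select the maximal dyadic cubes $\{Q_{j}\}$ for which $\prod_{i=1}^{m}\ave{f_{i}}_{Q_{j}}>t\,\ave{v}_{Q_{j}}$. By maximality and Lebesgue differentiation, $\Omega_{t}$ is covered, up to a null set, by $\bigcup_{j}Q_{j}$; moreover the $Q_{j}$ are pairwise disjoint and the collection is sparse in the sense of \cite{LOPTT}. So the problem reduces to
\begin{equation*}
t^{1/m}\sum_{j}(\nu v^{1/m})(Q_{j})\le C\prod_{i=1}^{m}\|f_{i}\|_{L^{1}(w_{i})}^{1/m}.
\end{equation*}

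To attack this, I would use the stopping condition to write $t^{1/m}<\ave{v}_{Q_{j}}^{-1/m}\prod_{i}\ave{f_{i}}_{Q_{j}}^{1/m}$, factor the front coefficient $(\nu v^{1/m})(Q_{j})\ave{v}_{Q_{j}}^{-1/m}$ as the $m$-fold product $\prod_{i=1}^{m}[(\nu v^{1/m})(Q_{j})\ave{v}_{Q_{j}}^{-1/m}]^{1/m}$, and apply the discrete H\"older inequality
\begin{equation*}
\sum_{j}a_{j}\prod_{i}b_{ij}^{1/m}\le\prod_{i}\Big(\sum_{j}a_{j}b_{ij}\Big)^{1/m}
\end{equation*}
exactly as in the $v\equiv 1$ argument of \cite{LOPTT}. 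This decouples the $m$ factors and leaves, for each $i$, a single-index sum of the form $\sum_{j}(\nu v^{1/m})(Q_{j})\ave{v}_{Q_{j}}^{-1/m}\ave{f_{i}}_{Q_{j}}$. The trivial bound $\ave{f_{i}}_{Q_{j}}\le(\inf_{Q_{j}}w_{i})^{-1}\ave{f_{i}w_{i}}_{Q_{j}}$, combined with the $A_{\vec 1}$ inequality $\nu(Q_{j})/|Q_{j}|\le C\prod_{k}(\inf_{Q_{j}}w_{k})^{1/m}$ to compensate the resulting $(\inf_{Q_{j}}w_{i})^{-1}$ factor, together with the $A_{\infty}$ property $(\nu v^{1/m})(Q_{j})\le C(\nu v^{1/m})(E_{j})$ on the pairwise-disjoint principal sets $E_{j}\subset Q_{j}$, should then collapse each sum to $\|f_{i}\|_{L^{1}(w_{i})}$.

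The main obstacle I anticipate is the factor $\ave{v}_{Q_{j}}^{-1/m}$ produced by stopping against $t\,\ave{v}_{Q_{j}}$: since $v$ is not itself an $A_{\infty}$ weight, no direct reverse-H\"older is available for $v$, and the absorption of this factor into $(\nu v^{1/m})(Q_{j})$ must be routed through the composite structure $\nu v^{1/m}\in A_{\infty}$ together with $\nu\in A_{1}$. This coupling, which is implicit in Theorem~\ref{LOPinf} and the arguments of \cite{LOP}, should supply the substitute for the missing $A_{\infty}$ control of $v$ alone, but the bookkeeping needed to make the final constant depend only on $[\vec w]_{A_{\vec 1}}$ and $[\nu v^{1/m}]_{A_{\infty}}$ is where I expect most of the technical work to lie.
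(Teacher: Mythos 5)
There is a genuine gap at the very first step of your stopping-time construction, and it is precisely where the whole difficulty of the theorem sits. The set $\Omega_t=\{x:\mathcal M^{d}(\vec f\,)(x)>t\,v(x)\}$ is a superlevel set against the \emph{pointwise} values of $v$, whereas your selected cubes are the maximal dyadic $Q_j$ with $\prod_i\langle f_i\rangle_{Q_j}>t\,\langle v\rangle_{Q_j}$, i.e.\ a condition against the \emph{average} of $v$. If $x\in\Omega_t$ there is a dyadic $Q\ni x$ with $\prod_i\langle f_i\rangle_Q>t\,v(x)$, but nothing forces $\prod_i\langle f_i\rangle_Q>t\,\langle v\rangle_Q$ for that or any other cube containing $x$: since $v$ is an essentially arbitrary weight (only $\nu v^{1/m}\in A_\infty$ is assumed), $\langle v\rangle_Q$ can be much larger than $v(x)$ on every dyadic cube containing $x$. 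So the claim that $\bigcup_j Q_j$ covers $\Omega_t$ up to a null set is false, and the reduction to $t^{1/m}\sum_j(\nu v^{1/m})(Q_j)$ does not follow. The remainder of your argument (Hölder over $j$, the $A_{\vec 1}$ inequality, disjointness) is the same as the $v\equiv1$ computation of \cite{LOPTT} and would indeed work \emph{if} such a covering existed, but the mismatch between $v(x)$ and $\langle v\rangle_Q$ is not ``bookkeeping'': it is the main obstruction, and your closing paragraph essentially concedes that you do not have a mechanism to handle it.

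For comparison, the paper's proof (following \cite{LOP}) resolves exactly this mismatch by a two-parameter decomposition: it first restricts to the annulus $\{1<\mathcal M_d(\vec f\,)/v\le 2\}$ and slices it by the level sets $\{a^{mk}<v\le a^{m(k+1)}\}$; it then takes the maximal cubes $I_j^k$ of $\{\mathcal M_d(\vec f\,)>a^{mk}\}$ (a stopping condition against a \emph{constant}, so the covering is legitimate) and classifies them by the discrepancy $l$ between $\langle v^{1/m}\rangle_{I_j^k}$ and $a^{k}$. The price is that one must prove exponential decay in $|l|$ of $\nu(E_k\cap I_j^k)/\nu(I_j^k)$ (two separate lemmas for $l\ge0$ and $l<0$, the latter using that $\langle\nu v^{1/m}\rangle_Q\lesssim\langle\nu\rangle_Q\langle v^{1/m}\rangle_Q$ for $\nu v^{1/m}\in A_\infty$), and then sum with principal cubes and a Carleson embedding governed by $[v^{1/m}]_{A_\infty}$ and $[\nu]_{A_1}$ (note that the hypotheses do give $v^{1/m}\in A_\infty$, contrary to your remark that no reverse Hölder information on $v$ is available). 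Only at the level of the maximal principal cubes does the $A_{\vec 1}$/Hölder computation you describe enter. Without some substitute for this decay-plus-principal-cubes machinery, your proposal does not close.
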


So \eqref{se111} holds for either $\vec{w}=(w_1,...,w_m) \in A_{\vec{1}}$ and $\nu v^{\frac 1m}\in A_\infty$ or (as a consequence of Theorem \ref{main}) if  the weights $w_i \in A_1$ for $i=1,...,m$ and $v \in A_\infty$. These conditions are independent. However, we believe that there is a unified condition that contains both such that \eqref{se111} holds. That is:

\begin{con}\label{con1}
Let $\vec{w}=(w_1,...,w_m) \in A_{\vec{1}}$, $v^{1/m}\in A_{\infty}$ and $\nu = w_1^\frac{1}{m}...w_m^\frac{1}{m}$.  Then there is a constant $C$ such that
$$\Bigg\| \frac{\mathcal M(\vec f\,)(x)}{v}\Bigg\|_{L^{\frac{1}{m}, \infty}(\nu v^\frac{1}{m})} \leq C \ \prod_{i=1}^m{\|f_i\|_{L^1(w_i)}}$$
\end{con}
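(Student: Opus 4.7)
The conjecture is the multilinear analogue of the Li--Ombrosi--P\'erez refinement (Theorem \ref{LOPinf}) of the Cruz-Uribe--Martell--P\'erez inequality (Theorem \ref{cmpteo}), in the same way that Theorem \ref{Max} is the multilinear analogue of the CMP theorem. The plan is therefore to adapt the argument of \cite{LOP} to the multisublinear maximal function $\mathcal M$, building on the machinery already deployed to establish Theorem \ref{Max}. By the standard reduction via finitely many dyadic grids and homogeneity in $t$, one is reduced to estimating $(\nu v^{1/m})(\{\mathcal M^{\mathcal D}(\vec f)>v\})$ for a fixed dyadic lattice $\mathcal D$. A Calder\'on--Zygmund selection produces a disjoint family $\{Q_j\}$ of maximal dyadic cubes with $\prod_{i=1}^{m}\frac{1}{|Q_j|}\int_{Q_j}|f_i|>\frac{1}{|Q_j|}\int_{Q_j}v$, so the problem reduces to bounding $\sum_{j}(\nu v^{1/m})(Q_j)$. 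On each $Q_j$ the $A_{\vec 1}$ hypothesis gives the essential-infimum bound
$$\frac{1}{|Q_j|}\int_{Q_j}\nu\;\le\;C\prod_{i=1}^{m}\bigl(\mathrm{ess\,inf}_{Q_j}w_i\bigr)^{1/m},$$
which is the only handle $A_{\vec 1}$ offers on $\nu$ in terms of the individual $w_i$.

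The decisive step, which is where \cite{LOP} parts company with the earlier CMP argument, is a stopping-time selection on $\{Q_j\}$ using only the $A_{\infty}$ behaviour of the $v$-factor $v^{1/m}$ and \emph{not} of the full product $\nu v^{1/m}$. One constructs a sparse subfamily $\mathcal S\subset\{Q_j\}$ by stopping whenever $\frac{1}{|Q|}\int_{Q}v^{1/m}$ jumps by a fixed large factor relative to the last selected ancestor; the reverse H\"older inequality for $v^{1/m}$, available because $v^{1/m}\in A_{\infty}$, then controls each $(\nu v^{1/m})(S)$ by $C\,\nu(S)\bigl(\frac{1}{|S|}\int_{S}v^{1/m}\bigr)|S|$, with the constant depending on the $A_{\infty}$ constant of $v^{1/m}$. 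Summing over $\mathcal S$, combining with the maximality condition on each $Q_j$, and applying H\"older's inequality across the $m$ indices should yield the required bound $C\prod_{i=1}^{m}\|f_i\|_{L^{1}(w_i)}$ once the displayed $A_{\vec 1}$ inequality above is invoked.

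The main obstacle is precisely this decoupling. One must produce the sparseness of $\mathcal S$ using only the $A_{\infty}$ property of $v^{1/m}$, while the weight appearing in the conclusion is the product $\nu v^{1/m}$ and $\nu$ is itself a geometric mean of $m$ weights controlled only through a joint condition. In the linear setting of \cite{LOP} the two weights interact cleanly because $u\in A_{1}$ gives a pointwise lower bound of $u$ by its average, and a single reverse H\"older inequality on $v$ suffices; the multilinear analogue of this interaction is more delicate because $A_{\vec 1}$ does not imply $A_{1}$ individually on each $w_i$, and H\"older's inequality must absorb all $m$ factors simultaneously while the stopping-time constants do not see $\nu$ at all. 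Verifying that the sparse/packing estimates still close in this regime is the subtle point that keeps the statement a conjecture rather than a theorem.
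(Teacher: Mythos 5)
This statement is labelled a \emph{conjecture} in the paper: the authors offer no proof of it, and explicitly present it as an open problem whose hypotheses ($\vec w\in A_{\vec 1}$ and $v^{1/m}\in A_\infty$) are strictly weaker than those under which they can actually prove the estimate (Theorem \ref{Max}, which additionally requires $\nu v^{1/m}\in A_\infty$). Your proposal does not close this gap; it reproduces the outline of the proof of Theorem \ref{Max} (dyadic reduction, Calder\'on--Zygmund selection, stopping times on the averages of $v^{1/m}$) and then, at the decisive moment, asserts the bound $\nu v^{1/m}(S)\le C\,\langle \nu\rangle_S\,\langle v^{1/m}\rangle_S\,|S|$ ``because $v^{1/m}\in A_\infty$.'' That inequality is exactly Lemma \ref{rh} of the paper applied to the pair $(\nu, v^{1/m})$, and its proof there uses the hypothesis that the \emph{product} $\nu v^{1/m}$ is in $A_\infty$ — precisely the hypothesis the conjecture drops. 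It does not follow from $v^{1/m}\in A_\infty$ and $\nu\in A_1$ separately: the natural attempt via H\"older plus reverse H\"older would need reverse H\"older exponents for $\nu$ and $v^{1/m}$ that are mutually conjugate, whereas $A_1$ and $A_\infty$ only guarantee reverse H\"older with exponents close to $1$ for each factor. This is the whole obstruction, and your sketch supplies no substitute for it.

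You do acknowledge in your final paragraph that ``verifying that the sparse/packing estimates still close in this regime is the subtle point that keeps the statement a conjecture rather than a theorem.'' That is an accurate assessment, but it means what you have written is a description of why the problem is hard, not a proof. As submitted, the argument has a genuine gap at the product reverse-H\"older step (and consequently also in the exponential-decay lemma for $l<0$ and in the final estimate of $\sum_Q \nu v^{1/m}(Q)$, both of which rely on the same inequality), so the conjecture remains unproved.
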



\begin{Remark}
In general, under the hypothesis of Theorem \ref{Max} the estimate \eqref{se111} does not hold if $\mathcal M(\vec f)$  is replaced by $\prod_{i=1}^m Mf_i$ even in the case that $v(x)=1$.  This fact was proved in Remark 7.5 in \cite{LOPTT}.   

\end{Remark}

The following (extrapolation) theorem allows us to reduce the problem of multilinear Calder\'on-Zygmund operators to the multilinear maximal function, exactly as in the linear case.  Actually, the theorem below was essentially obtained in \cite{OmPe}, which is a combination of  Theorem 1.5  and some observations in Section 2.2 there. However,  for the sake of completeness we will give a complete proof in Appendix A .

\begin{theorem}[\cite{OmPe}]\label{extension}
Let $\vec{w}=(w_1,...,w_m) \in A_{\vec{1}}$, $v^{1/m}\in A_{\infty}$ and $\nu = w_1^\frac{1}{m}...w_m^\frac{1}{m}$.  Then
$$\Bigg\| \frac{T(\vec{f})}{v}\Bigg\|_{L^{\frac{1}{m}, \infty}(\nu v^\frac{1}{m})} \leq C \ \Bigg\| \frac{\mathcal{M}(\vec{f})}{v}\Bigg\|_{L^{\frac{1}{m}, \infty}(\nu v^\frac{1}{m})}$$
where $C$ is a constant and $T$ is a multilinear Calder\'on-Zygmund operator.
\end{theorem}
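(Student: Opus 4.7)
The plan is to deduce Theorem \ref{extension} from the classical multilinear Coifman--Fefferman comparison between $T$ and $\mathcal M$. Recall that for any $w\in A_\infty$ and any $0<p<\infty$ one has the good-$\lambda$ inequality
\begin{equation*}
w\bigl(\{|T\vec f|>2\lambda,\ \mathcal M\vec f\le\gamma\lambda\}\bigr)\le C\gamma^{\delta}\, w\bigl(\{|T\vec f|>\lambda\}\bigr),
\end{equation*}
with $\delta=\delta([w]_{A_\infty})>0$ and $\gamma>0$ small; this is a standard consequence of the multilinear Calder\'on--Zygmund machinery developed in \cite{LOPTT} and yields both the strong-type bound $\|T\vec f\|_{L^p(w)}\le C\|\mathcal M\vec f\|_{L^p(w)}$ and its weak-type analogue for every $w\in A_\infty$. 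I would take the weak-type form as the input.

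Given this input, the plan is to transfer it to the mixed weak norm $\|(\cdot)/v\|_{L^{1/m,\infty}(\nu v^{1/m})}$ following the scheme of Ombrosi--P\'erez. Fix $t>0$ and split
\begin{equation*}
\nu v^{1/m}\bigl(\{|T\vec f|/v>2t\}\bigr) \le \nu v^{1/m}\bigl(\{\mathcal M\vec f/v>\gamma t\}\bigr)+\nu v^{1/m}\bigl(\{|T\vec f|/v>2t,\ \mathcal M\vec f/v\le \gamma t\}\bigr).
\end{equation*}
The first summand is already of the form required for the mixed weak norm of $\mathcal M\vec f/v$. For the second, Calder\'on--Zygmund decompose the open set $\Omega_{\gamma t}=\{\mathcal M\vec f>\gamma t\,v\}$ into a Whitney family $\{Q_j\}$, apply the good-$\lambda$ inequality on each $Q_j$ with the $A_\infty$ weight $v^{1/m}$, and then invoke the $A_{\vec 1}$ hypothesis on $\vec w$, which by \cite{LOPTT} implies $\nu\in A_m$, to exchange the measure $v^{1/m}$ for $\nu v^{1/m}$ at the cost of a uniformly bounded constant on each $Q_j$. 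Summing over $j$ and choosing $\gamma$ small allows one to absorb the resulting term, completing the comparison after the standard rearrangement argument.

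The main obstacle is that the weight $\nu v^{1/m}$ is not in general an $A_\infty$ weight, so the good-$\lambda$ inequality cannot be applied directly to it. The entire strategy therefore rests on decoupling the two factors: run the Coifman--Fefferman good-$\lambda$ step using only $v^{1/m}\in A_\infty$, and then use the structural $A_{\vec 1}$ information on $\vec w$, together with the reverse-H\"older inequality for $A_\infty$ weights, to convert local $v^{1/m}$-estimates on each Whitney cube into $\nu v^{1/m}$-estimates. This exchange of weights on Whitney cubes is the most delicate step and must be carried out faithfully following \cite{OmPe}; the remaining arguments are the routine rearrangement of the good-$\lambda$ inequality.
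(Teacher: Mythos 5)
Your proposal has a genuine gap at its core. The good-$\lambda$ inequality you take as input compares the sets $\{|T\vec f|>2\lambda\}$ and $\{\mathcal M\vec f\le\gamma\lambda\}$ at a \emph{fixed} numerical level $\lambda$; the sets you actually need to handle, $\{|T\vec f|/v>2t,\ \mathcal M\vec f/v\le\gamma t\}$, involve the spatially varying threshold $t\,v(x)$. The local Calder\'on--Zygmund estimates underlying the good-$\lambda$ argument (comparison of $T\vec f$ on a Whitney cube $Q_j$ with its value at a point of the complement of the dilated cube) are tied to a fixed level and do not mesh with a threshold that oscillates with $v$ inside $Q_j$; this is exactly the obstruction that makes mixed weak-type estimates hard, and it is why \cite{CMP}, \cite{OmPe} and \cite{LOP} all avoid the good-$\lambda$ route. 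Your second key step --- exchanging the measure $v^{1/m}$ for $\nu v^{1/m}$ on each Whitney cube ``at a uniformly bounded cost'' --- is also unjustified: $\nu\in A_1$ (which is what $\vec w\in A_{\vec 1}$ gives, not merely $\nu\in A_m$) bounds $\langle\nu\rangle_{Q}$ by $\inf_Q\nu$, so it yields a \emph{lower} bound for $\nu$ on subsets of $Q$, but it provides no pointwise upper bound; hence from $v^{1/m}(E)\le C\gamma^{\delta}v^{1/m}(Q_j)$ for the bad set $E\subset Q_j$ you cannot conclude $\nu v^{1/m}(E)\le C'\gamma^{\delta}\nu v^{1/m}(Q_j)$. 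Since $\nu v^{1/m}$ need not be in $A_\infty$ (as you note), there is no doubling or reverse-H\"older structure to rescue this exchange, and no argument of this kind appears in \cite{OmPe}. You also gloss over the a priori finiteness of $\|T\vec f/v\|_{L^{1/m,\infty}(\nu v^{1/m})}$ needed for the absorption step.

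The paper's actual proof is structurally different and circumvents both problems. It writes $\|T\vec f/v\|_{L^{1/m,\infty}(\nu v^{1/m})}^{1/(mr)}=\||T\vec f/v|^{1/(mr)}\|_{L^{r,\infty}(\nu v^{1/m})}$ and dualizes against $h\in L^{r',1}(\nu v^{1/m})$; it then runs the Rubio de Francia algorithm on the operator $Sf=M(f\nu)/\nu$ (shown bounded on $L^{r',1}(\nu v^{1/m})$ via interpolation between $L^{p_0}$ and $L^\infty$) to replace $h$ by a majorant $\mathcal Rh$ for which the single weight $\mathcal Rh\,\nu\,v^{1/(mr')}$ lies in $A_1\subset A_\infty$. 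Only then is the Coifman--Fefferman comparison invoked, in its \emph{strong-type} form $\int|T\vec f|^{s}w\le C\int\mathcal M(\vec f)^{s}w$ with $s=1/(mr)$ and the manufactured $A_\infty$ weight $w=\mathcal Rh\,\nu\,v^{1/(mr')}$, after which H\"older in Lorentz spaces closes the argument. In other words, the correct strategy is to build a genuine $A_\infty$ weight by duality before applying the $T$-versus-$\mathcal M$ comparison, rather than to apply the comparison with $v^{1/m}$ and repair the measure afterwards. Your proposal identifies the right input (the $A_\infty$ Coifman--Fefferman principle) but the transfer mechanism you describe does not work.
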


\bigskip

Now, as a consequence of Theorems \ref{main}, \ref{Max} and \ref{extension} we obtain the main result of this paper:  

\begin{theorem}\label{muczo}
Let $T$ be a multilinear Calder\'on-Zygmund operator, $\vec{w}=(w_1,...,w_m)$ and $\nu = w_1^\frac{1}{m}...w_m^\frac{1}{m}$. Suppose that  $\vec{w} \in A_{\vec{1}}$ and $\nu v^{\frac 1m}\in A_\infty$ or  $w_1,...,w_m \in A_1$ and $v \in A_\infty$.  Then there is a constant $C$ such that
\begin{equation}\label{se}
\Bigg\| \frac{T(\vec f\,)(x)}{v}\Bigg\|_{L^{\frac{1}{m}, \infty}(\nu v^\frac{1}{m})} \leq C \ \prod_{i=1}^m{\|f_i\|_{L^1(w_i)}}
\end{equation}
\end{theorem}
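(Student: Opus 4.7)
Theorem \ref{muczo} follows by combining the multi(sub)linear maximal estimates of Theorems \ref{main} and \ref{Max} with the extrapolation bound of Theorem \ref{extension}; no new analytic content is needed beyond these three ingredients.

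First, under each of the two hypothesis combinations on $(\vec{w},v)$ I would establish the intermediate weak-type estimate
\[
\Bigg\| \frac{\mathcal{M}(\vec{f}\,)}{v}\Bigg\|_{L^{1/m,\infty}(\nu v^{1/m})} \;\leq\; C \prod_{i=1}^m \|f_i\|_{L^1(w_i)}.
\]
Under $\vec{w}\in A_{\vec{1}}$ and $\nu v^{1/m}\in A_\infty$ this is exactly Theorem \ref{Max}. Under $w_i\in A_1$ for every $i$ and $v\in A_\infty$, Theorem \ref{main} provides the same inequality with the larger operator $\prod_{i=1}^m Mf_i$ on the left, and the pointwise domination $\mathcal{M}(\vec{f}\,)\leq \prod_{i=1}^m Mf_i$ then yields the desired $\mathcal{M}$-bound.

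Next I would invoke Theorem \ref{extension} to replace $\mathcal{M}(\vec{f}\,)$ by $T(\vec{f}\,)$ on the left-hand side, producing \eqref{se}. The required hypotheses $\vec{w}\in A_{\vec{1}}$ and $v^{1/m}\in A_\infty$ hold in the second branch because $w_i\in A_1$ implies $\vec{w}\in A_{\vec{1}}$ and $v\in A_\infty$ implies $v^{1/m}\in A_\infty$ (the $A_\infty$ class being stable under positive powers); in the first branch the $A_{\vec{1}}$ condition is assumed outright, while the $A_\infty$ information relevant for the extrapolation is available from $\nu v^{1/m}\in A_\infty$ together with $\nu\in A_1$, the latter being a consequence of $\vec{w}\in A_{\vec{1}}$. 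No serious obstacle is expected: the theorem is essentially a formal corollary of the three cited results, and the only care required is the verification of hypothesis compatibility for each branch before chaining the two estimates.
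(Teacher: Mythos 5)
Your proposal is correct and is essentially the paper's own argument: the authors obtain Theorem \ref{muczo} exactly by combining Theorem \ref{Max} in the first branch, Theorem \ref{main} together with the pointwise domination $\mathcal M(\vec f\,)\le\prod_{i=1}^m Mf_i$ in the second branch, and Theorem \ref{extension}, with the same hypothesis checks you indicate (in particular, $\nu\in A_1$ and $\nu v^{1/m}\in A_\infty$ yield $v^{1/m}\in A_\infty$ via Lemma \ref{pesos}). The only parenthetical to tighten is your justification that $v\in A_\infty$ implies $v^{1/m}\in A_\infty$: the $A_\infty$ class is stable under powers $s\in(0,1]$, not under all positive powers, which suffices here since $1/m\le 1$.
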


Recall the definition of $RH_{\infty}$:

\begin{definition}\label{RHinf}
We denote by $RH_{\infty}$ the class of weights $w$ such that for all cube $Q$, there exists a constant $C$, which is independent of $Q$, such that $$ \mathop{\textup{ess}\, \sup}_{x\in Q}  \ w(x) \leq \frac{C}{|Q|} \ \int_Q {w(x) dx}.$$
\end{definition}

Since if $u \in A_1$ and $v\in RH_{\infty}$, then $uv^{\frac 1m}\in A_{\infty}$ (see Lemma \ref{pesos} below), we have a direct corollary of Theorem \ref{muczo}.

\begin{cor}
Let $\vec{w}=(w_1,...,w_m)\in A_{\vec{1}}$  and let $v\in RH_{\infty}$. Then there is a constant $C$ such that
$$\Bigg\| \frac{T(\vec f\,)(x)}{v}\Bigg\|_{L^{\frac{1}{m}, \infty}(\nu v^\frac{1}{m})} \leq C \ \prod_{i=1}^m{\|f_i\|_{L^1(w_i)}}$$
\end{cor}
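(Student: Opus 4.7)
The plan is to reduce the corollary to the first case of Theorem \ref{muczo}. What needs to be checked is that the hypothesis ``$\vec w\in A_{\vec 1}$ and $v\in RH_\infty$'' implies the hypothesis ``$\vec w\in A_{\vec 1}$ and $\nu v^{1/m}\in A_\infty$'' under which Theorem \ref{muczo} already provides exactly the desired inequality. Once that verification is done, nothing else is required: we simply invoke Theorem \ref{muczo} with the same constants.

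The verification itself factors through the intermediate claim that $\vec w\in A_{\vec 1}$ forces $\nu\in A_1$ (a fact known from \cite{LOPTT}, but easy enough to reprove in a line). Starting from the definition \eqref{multiap} with $q_1=\dots=q_m=1$ and $p=1/m$, the $A_{\vec 1}$ condition reads
$$\Bigl(\frac{1}{|Q|}\int_Q\nu\Bigr)\prod_{i=1}^m(\inf_Q w_i)^{-1/m}\le C$$
for every cube $Q$. Since $\nu(x)=\prod_i w_i(x)^{1/m}\ge\prod_i(\inf_Q w_i)^{1/m}$ pointwise on $Q$, we have $\prod_i(\inf_Q w_i)^{1/m}\le\inf_Q\nu$, and therefore $\frac{1}{|Q|}\int_Q\nu\le C\inf_Q\nu$, i.e.\ $\nu\in A_1$.

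With $\nu\in A_1$ in hand, I would apply Lemma \ref{pesos} (cited in the excerpt immediately before the corollary) taking $u=\nu$: since $\nu\in A_1$ and $v\in RH_\infty$, it yields $\nu v^{1/m}\in A_\infty$. At this point both hypotheses of the first alternative in Theorem \ref{muczo} are satisfied, and that theorem gives \eqref{se}, which is exactly the conclusion of the corollary.

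There is essentially no obstacle: the only nontrivial ingredient is the implication $\vec w\in A_{\vec 1}\Rightarrow\nu\in A_1$, and even that reduces to the elementary pointwise inequality used above together with the definition of the multilinear $A_{\vec p}$ class. Everything else is a direct citation of Theorem \ref{muczo} and Lemma \ref{pesos}.
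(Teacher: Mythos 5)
Your proposal is correct and follows essentially the same route as the paper: the authors also deduce the corollary from the first alternative of Theorem \ref{muczo} by noting that $\vec w\in A_{\vec 1}$ gives $\nu\in A_1$ (they cite Theorem 3.6 of \cite{LOPTT}, whereas you reprove it from the definition) and then applying Lemma \ref{pesos} to conclude $\nu v^{1/m}\in A_\infty$. Your one-line verification that $\nu\in A_1$ is a correct rendering of the $A_{\vec 1}$ condition (up to taking $m$-th roots of the defining inequality), so nothing further is needed.
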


\bigskip

The article is organized as follows. In Section 2 we prove Theorem \ref{main}. The proof of Theorem \ref{Max} is presented in Section 3. In Section 4 as an application of Theorem \ref{muczo}  we obtain a vector-valued extension of the mixed weighted inequalities obtained for multilinear Calder\'on-Zygmund operators.  The last Section is the Appendix A where we give a complete proof of Theorem \ref{extension}.

\section{Proof of Theorem \ref{main}}

First, we need the following Lemma.

\begin{lemma}\label{pesos}
\
\begin{enumerate}[(a)]
\item \label{a} $w\in A_{\infty}$ if and only if $w=w_1w_2$, where $w_1\in A_1$ and $w_2 \in RH_{\infty}$.
\item \label{b} If $w\in A_1$, then $w^{-1} \in RH_{\infty}$.
\item \label{c} If $u,v \in RH_{\infty}$, then $uv\in RH_{\infty}$.
\item \label{Ainf}  If $w\in A_{\infty}$ and $u\in RH_{\infty}$, then $wu\in A_{\infty}$.
\item \label{d} If $w \in RH_{\infty}$, then $w^{s}\in RH_{\infty}$ for any $s>0$.
\end{enumerate}
\end{lemma}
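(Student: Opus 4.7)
The plan is to treat the five items in turn, with (b) and (d) following directly from definitions and Jensen's inequality, (Ainf) from the level-set and density characterization of $A_\infty$, (a) by citation of the classical factorization theorem, and (c) -- the main obstacle -- requiring a more refined argument.

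For (b), I rewrite the $A_1$ condition as $\overline w_Q\le C\,w(x)$ a.e.\ on $Q$ and combine with Jensen's inequality $\overline{w^{-1}}_Q\ge 1/\overline w_Q$ to conclude $w^{-1}(x)\le C/\overline w_Q\le C\,\overline{w^{-1}}_Q$ a.e., which is exactly $w^{-1}\in RH_\infty$. For (d), I would use a uniform level-set argument valid for all $s>0$: the set $E_\eta=\{x\in Q:w(x)>\eta\|w\|_{L^\infty(Q)}\}$ satisfies $|E_\eta|/|Q|\ge 1/C_w-\eta$ (obtained by splitting $\int_Q w$ into its pieces on $E_\eta$ and $Q\setminus E_\eta$, using $\overline w_Q\ge \|w\|_{L^\infty(Q)}/C_w$); taking $\eta=1/(2C_w)$ gives $|E_\eta|\ge|Q|/(2C_w)$ and hence $\overline{w^s}_Q\ge (\eta\|w\|_{L^\infty(Q)})^s|E_\eta|/|Q|\ge (2C_w)^{-(s+1)}\|w\|_{L^\infty(Q)}^{s}$, i.e.\ $w^s\in RH_\infty$.

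For (Ainf), I use the characterization that $w\in A_\infty$ iff there exist $C,\varepsilon>0$ with $w(E)/w(Q)\le C(|E|/|Q|)^\varepsilon$ for every measurable $E\subset Q$. The pointwise bound $u\le C_u\overline u_Q$ from $RH_\infty$ yields the upper estimate $(wu)(E)\le C_u\overline u_Q\,w(E)$. For the lower estimate on $(wu)(Q)$, I would take $F=\{u\ge\overline u_Q/(2C_u)\}$, which has $|F|\ge|Q|/(2C_u)$ by the same level-set calculation; the $A_\infty$ property of $w$ then forces $w(F)\ge\delta\,w(Q)$ for some $\delta>0$, so $(wu)(Q)\ge(\overline u_Q/(2C_u))w(F)\ge(\delta/(2C_u))\overline u_Q\,w(Q)$. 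Taking ratios yields $(wu)(E)/(wu)(Q)\le(2C_u^2/\delta)\,w(E)/w(Q)\le C'(|E|/|Q|)^\varepsilon$, i.e.\ $wu\in A_\infty$. Part (a) then splits: the "if" direction is immediate from (Ainf) since $A_1\subset A_\infty$, while the "only if" direction is the classical Jones-type factorization theorem due to Cruz-Uribe and Neugebauer and will be cited.

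The hardest step will be (c). A naive use of $u\le C_u\overline u_Q$ and $v\le C_v\overline v_Q$ only gives $\|uv\|_{L^\infty(Q)}\le C_uC_v\overline u_Q\overline v_Q$, and this completes the proof only if one can also establish $\overline u_Q\overline v_Q\le C\,\overline{uv}_Q$; however, the latter fails as soon as $u$ and $v$ are strongly anti-correlated on $Q$ (e.g., peaked on disjoint halves of $Q$). The right approach is to use the equivalent formulation $w\in RH_\infty\iff w(E)\le C(|E|/|Q|)w(Q)$ for all measurable $E\subset Q$, together with a finer level-set analysis that identifies a subset on which $uv$ is comparable to $\|uv\|_{L^\infty(Q)}$ with density bounded below in terms of $C_u$ and $C_v$ alone; alternatively, the statement is part of the structure theory of reverse-H\"older classes developed by Cruz-Uribe and Neugebauer, and can be cited from there.
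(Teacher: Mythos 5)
Your proposal is correct in substance, and on the two items the paper actually proves, namely \eqref{Ainf} and \eqref{d}, you take a genuinely different route. The paper disposes of \eqref{a}, \eqref{b}, \eqref{c} by citation to \cite{CUN} and \cite{GCRF}, and then obtains \eqref{Ainf} as a two-line corollary of \eqref{a} and \eqref{c}: write $w=w_1w_2$ with $w_1\in A_1$ and $w_2\in RH_\infty$, absorb $u$ into $w_2$ using \eqref{c}, and refactor via \eqref{a}. Your direct proof of \eqref{Ainf} through the comparison-of-measures characterization of $A_\infty$ is sound --- the one step you leave implicit, that $|F|\ge \alpha|Q|$ forces $w(F)\ge \delta w(Q)$, is a standard equivalent form of $A_\infty$, though it does not follow by merely applying your stated condition to $Q\setminus F$, since $C(1-\alpha)^{\varepsilon}$ need not be less than $1$ --- and it has the structural advantage of not passing through \eqref{c}, which is the only genuinely nontrivial item of the lemma; it also gives you the ``if'' half of \eqref{a} for free. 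For \eqref{d} the paper splits cases: H\"older's inequality for $s\ge 1$ and the reverse-Jensen inequality $\langle w\rangle_Q\le C\langle w^s\rangle_Q^{1/s}$, valid for $A_\infty$ weights, for $s<1$; your level-set argument is more elementary, uniform in $s>0$, and uses only the $RH_\infty$ constant of $w$. Your direct proof of \eqref{b} is the standard one and is fine. The one place to be careful is \eqref{c}: your sketched ``finer level-set analysis'' is not yet a proof --- the anti-correlation obstruction you correctly identify is exactly what defeats the obvious candidate set $\{u\ge \|u\|_{L^\infty(Q)}/2\}\cap\{v\ge \|v\|_{L^\infty(Q)}/2\}$, whose intersection can be empty while $\|uv\|_{L^\infty(Q)}$ is much smaller than $\|u\|_{L^\infty(Q)}\|v\|_{L^\infty(Q)}$ --- so you should commit to the citation (Cruz-Uribe and Neugebauer prove closure of $RH_\infty$ under products, for instance via the characterization of $RH_\infty$ in terms of $w^s\in A_\infty$ uniformly for all $s>0$). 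Since the paper itself only cites \eqref{c}, that fallback is entirely acceptable.
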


All these properties of $A_p$ classes of Muckenhoupt are well known. The first three can be found in \cite{CUN} or \cite{GCRF} for instance. However,  as far as we know, \eqref{Ainf} and \eqref{d} are not written specifically in any place, so in the following paragraph we present a simple argument for them.

\
\

Proof of (\ref{Ainf}): 
Since $w\in A_{\infty}$, by (\ref{a}), $w=w_1w_2$, where $w_1\in A_1$ and $w_2\in RH_{\infty}$.
By (\ref{c}), $w_2u\in RH_{\infty}$. Then,
$wu=(w_1w_2)u=w_1(w_2u)$. Now, by (\ref{a}), $wu\in A_{\infty}$.

Proof of (\ref{d}): If $s\ge 1$, this is just by H\"older's inequality, so we only need to consider the case $s<1$. Since $w\in RH_\infty\subset A_\infty$, then 
\[
\frac 1{|Q|}\int_Q w \le C_{s,w} \Big(\frac 1{|Q|}\int_Q w^s\Big)^{\frac 1s}.
\]
By definition, our claim follows immediately. 

\noindent \textbf {Proof of Theorem \ref{main} :}
The main idea of this proof is to reduce the problem to the linear case and then apply Theorem \ref{LOPinf}. We define $$E=\{ x: v(x)< \prod_{i=1}^m{Mf_i}(x) \leq 2v(x) \}$$
Let $\tilde{v_i}=\prod_{j=1, j\neq i}^m{(Mf_j)}^{-1}$ and let $v_i=v\tilde{v_i}$.
Observe that $v\in A_{\infty}$ and $\tilde{v_i}\in RH_{\infty}$. By Lemma \ref{pesos} (\ref{Ainf}), $v_i\in A_{\infty}$.
In order to prove the theorem it is enough to show that
$$v^{\frac{1}{m} }\nu (E)\leq C \prod_{i=1}^{m}\|f_i\|_{L^1(w_i)}.$$
By H\"{o}lder's inequality and Theorem \ref{LOPinf}, we have 
\begin{align*}
v^\frac{1}{m} \nu (E) & \leq {\int_E{\big(\prod_{i=1}^{m} Mf_i \ w_i\big)^{\frac{1}{m}}}}\leq \prod_{i=1}^{m}{\Big( \int_E{Mf_i \ w_i} \Big) ^{\frac{1}{m}}}\leq 2 \ \prod_{i=1}^{m}\Big( \int_E {v_i \ w_i}\Big)^{\frac{1}{m}}  \\
&\leq 2 \ \prod_{i=1}^{m}\Big( \int_{\{ x : Mf_i >v_i\}} {v_i \ w_i}\Big)^{\frac{1}{m}}\leq C \ \prod_{i=1}^{m}{\|f_i\|_{L^1(w_i)}^{\frac 1m}},
\end{align*}
where in the last inequality we have used Theorem \ref{LOPinf} since $w_i\in A_1$ and $v_i\in A_{\infty}$.

\section{Proof of Theorem \ref{Max}}
We follow the strategy of \cite{LOP}. So we only need to consider the dyadic multilinear maximal functions. First, recall that if $\vec{w}=(w_1,...,w_m) \in A_{\vec{1}}$, then  $\nu = w_1^\frac{1}{m}...w_m^\frac{1}{m}\in A_1$ (see Theorem 3.6  in \cite{LOPTT}). On the other hand, since  $\nu \in A_1$ it is not difficult to check that the hypothesis $\nu v^{\frac 1m}\in A_\infty$ implies that $v^{\frac 1m}\in A_\infty$.
 
We shall prove 
\[
\nu v^{\frac 1m}\Big(\Big\{x: 1<\frac{\mathcal M_d(f_1,\cdots,f_m)(x)}{v(x)}\le 2\Big\}\Big)\le C \Big(\prod_{i=1}^m\int_{\mathbb R^n} |f_i|w_i\Big)^{\frac 1m}
\] Without loss of generality, we can assume $f_i\ge 0$, $i=1,\cdots,m$.
Let 
\[
E_k:=\Big\{x: 1<\frac{\mathcal M_d(f_1,\cdots,f_m)(x)}{v(x)}\le 2, \,a^{mk}<v(x)\le a^{m(k+1)}\Big\},
\] where $a>2^{n}$. Again, define 
\[
\Omega_k=\{\mathcal M_d(f_1,\cdots,f_m)> a^{mk}\}
\]
and let $\{I_j^k\}_j$ be the collection of maximal dyadic cubes in $\Omega_k$. Then by maximality, $a^{mk}<\prod_{i=1}^m \langle f_i\rangle_{I_j^k}\le 2^{mn}a^{mk}$. Splitting the collection $\{I_j^k\}_j$ to 
\[
\mathcal Q_{l,k}=\{I_j^k: a^{k+l}\le\langle v^{\frac 1m}\rangle_{I_j^k}< a^{k+l+1}\}, \quad l\in \mathbb Z.
\]
Then we have
\begin{align*}
\sum_{k\in \mathbb Z}\nu v^{\frac 1m}(E_k)&=\sum_{k\in \mathbb Z}\nu v^{\frac 1m}(E_k\cap\Omega_k)= \sum_{k\in \mathbb Z}\sum_{j}\nu v^{\frac 1m}(E_k\cap I_{j}^k) 
\\
&\le \sum_{k\in \mathbb Z}\sum_{l\ge 0}\sum_{I_j^k\in \mathcal Q_{l,k}} a^{k+1}\nu(E_k\cap I_j^k)
\\
&= \sum_{k\in \mathbb Z}\sum_{l\ge 0}\sum_{I_j^k\in \Gamma_{l,k}} a^{k+1}\nu(E_k\cap I_j^k),
\end{align*}
where
\[
\Gamma_{l,k}=\{I_{j}^k\in \mathcal Q_{l,k}: |I_j^k\cap \{x: a^k<v^{\frac 1m}\le a^{k+1}\}|>0\}.
\]
 From now on, we shall deal with the case $l=-1$ and $l \ge 0$ separately. By monotone convergence theorem, it suffices to give a uniform estimate for
\[
\sum_{k\ge N} \sum_{l\ge 0}\sum_{I_j^k\in \Gamma_{l,k}} a^{k+1}\nu(E_k\cap I_j^k),
\]
where $N<0$. 
We have the following two lemmas. The proofs are essentially given in \cite{LOP}.
\begin{lemma}
$\Gamma=\cup_{ l\in \mathbb Z}\cup_{k\ge N}\Gamma_{l,k}$ is sparse.
\end{lemma}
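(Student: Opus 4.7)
The plan is to prove sparseness by exhibiting, for every $I\in\Gamma$, a ``major subset'' $E(I)\subset I$ with $|E(I)|\ge\tfrac12|I|$ such that the $E(I)$'s are pairwise disjoint. The natural candidate is
\[
E(I):=I\setminus\bigcup_{\substack{I'\in\Gamma\\ I'\subsetneq I}} I'.
\]
Disjointness of the family $\{E(I)\}_{I\in\Gamma}$ is automatic from the dyadic structure: two distinct members of $\Gamma$ are either disjoint or one is strictly contained in the other, and in the latter case the smaller one has been removed when forming the major subset of the larger. So the task reduces to bounding $\sum_{I'\in\Gamma,\,I'\subsetneq I}|I'|$ by $\tfrac12|I|$, which will be arranged by choosing the parameter $a$ large enough; the preceding portion of the proof only required $a>2^n$, so we are free to enlarge $a$.

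The key ingredient is a localized weak $L^{1/m,\infty}$ bound for $\mathcal M_d$. Fix $I=I_j^k\in\Gamma$. Since $I_j^k$ is maximal in $\Omega_k$, any strict dyadic ancestor $Q$ of $I_j^k$ satisfies $\prod_i\langle f_i\rangle_Q\le a^{mk}$. In particular, for every $k'>k$ and every $x\in I_j^k$, the supremum defining $\mathcal M_d(\vec f\,)(x)$ can exceed $a^{mk'}$ only through a dyadic subcube of $I_j^k$. Combining this with the standard weak-type bound $|\{\mathcal M_d(\vec g)>t\}|\le t^{-1/m}\prod_i\|g_i\|_{L^1}^{1/m}$ applied to $g_i:=f_i\chi_{I_j^k}$, together with the stopping inequality $\prod_i\langle f_i\rangle_{I_j^k}\le 2^{mn}a^{mk}$ (which comes from the dyadic parent of $I_j^k$ lying outside $\Omega_k$), gives
\[
|\Omega_{k'}\cap I_j^k|\le a^{-k'}\prod_{i=1}^m\Big(\int_{I_j^k} f_i\Big)^{1/m}\le 2^n a^{k-k'}|I_j^k|.
\]

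To conclude, observe that every $I'\in\Gamma$ with $I'\subsetneq I_j^k$ is, by construction, a maximal dyadic cube in some $\Omega_{k'}$ with $k'>k$; for each fixed $k'$ these cubes are pairwise disjoint and contained in $\Omega_{k'}\cap I_j^k$. Summing in $k'$ therefore yields
\[
\sum_{\substack{I'\in\Gamma\\ I'\subsetneq I_j^k}}|I'|\le\sum_{k'>k}|\Omega_{k'}\cap I_j^k|\le \frac{2^n}{a-1}|I_j^k|,
\]
which drops below $\tfrac12|I_j^k|$ as soon as $a>2^{n+1}+1$. The main delicate point is the localization step: one must verify that ancestors of $I_j^k$ never realize the supremum in $\mathcal M_d(\vec f\,)$ at a level above $a^{mk}$, so that the weak-type estimate is legitimately applied to $L^1(I_j^k)$ data. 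Once that is in hand, the geometric decay in $k'-k$ is immediate and the whole argument reduces to a summable geometric series.
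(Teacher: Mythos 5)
Your proof is correct and is essentially the standard argument that the paper delegates to \cite{LOP}: the maximal cubes of the level sets $\Omega_{k}$ at geometrically spaced heights form a sparse family, via the localized weak $(1,\dots,1;1/m)$ bound for $\mathcal M_d$ restricted to each $I_j^k$ together with the stopping inequality $\prod_i\langle f_i\rangle_{I_j^k}\le 2^{mn}a^{mk}$. The only (harmless) deviation is that by summing over all levels $k'>k$ rather than removing only $\Omega_{k+1}\cap I_j^k$ and using nestedness, you need to enlarge $a$ beyond $2^{n}$ (or settle for a sparseness constant smaller than $\tfrac12$), which, as you note, is permissible since $a$ is at the prover's disposal.
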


\begin{lemma}
For $l\ge 0$ and $I_j^k\in \Gamma_{l,k}$, there exist constants $c_1$ and $c_2$ depending on $\nu,v$ such that
\[
  \nu(E_k\cap I_j^k)\le c_1  e^{- c_2 l} \nu(I_j^k).
\]
\end{lemma}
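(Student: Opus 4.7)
The heart of the matter is the tension between the pointwise size of $v^{1/m}$ on $E_k\cap I_j^k$ and its average on $I_j^k$. On $E_k$ we have $v(x)\le a^{m(k+1)}$, hence $v^{1/m}(x)\le a^{k+1}$, whereas $I_j^k\in\Gamma_{l,k}\subset\mathcal Q_{l,k}$ forces $\langle v^{1/m}\rangle_{I_j^k}\ge a^{k+l}$. When $l$ is large, $v^{1/m}$ is therefore much smaller than its own average on the set we want to estimate, and the $A_\infty$ theory should amplify this gap into exponential decay.

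I would first extract Lebesgue--measure decay. By the observation recalled in the excerpt, the hypotheses $\nu\in A_1$ and $\nu v^{1/m}\in A_\infty$ yield $v^{1/m}\in A_\infty$, so $\log v^{1/m}$ lies in $\mathrm{BMO}$ with norm controlled by $[v^{1/m}]_{A_\infty}$. The Fujii--Wilson (reverse Jensen) form of $A_\infty$ gives
\[
\langle\log v^{1/m}\rangle_{I_j^k}\ge\log\langle v^{1/m}\rangle_{I_j^k}-C_v\ge(k+l)\log a-C_v,
\]
while the pointwise bound on $E_k\cap I_j^k$ reads $\log v^{1/m}(x)\le(k+1)\log a$. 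Subtracting,
\[
\langle\log v^{1/m}\rangle_{I_j^k}-\log v^{1/m}(x)\ge(l-1)\log a-C_v \qquad \text{on } E_k\cap I_j^k.
\]
John--Nirenberg for BMO then supplies constants $c_1',c_2'>0$, depending only on $[v^{1/m}]_{A_\infty}$ and $a$, such that $|E_k\cap I_j^k|/|I_j^k|\le c_1' e^{-c_2' l}$.

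I would then transfer this Lebesgue estimate to $\nu$-measure. Because $\nu\in A_1\subset A_\infty$, there exists $\eta>0$ such that $\nu(F)/\nu(I_j^k)\le C_\nu(|F|/|I_j^k|)^{\eta}$ for every measurable $F\subset I_j^k$. Applying this to $F=E_k\cap I_j^k$ and inserting the previous bound yields
\[
\nu(E_k\cap I_j^k)\le C_\nu(c_1')^\eta e^{-c_2'\eta l}\,\nu(I_j^k),
\]
which is the assertion after relabeling the constants. The finitely many values of $l$ for which $(l-1)\log a-C_v$ fails to be positive are absorbed into $c_1$ via the trivial bound $\nu(E_k\cap I_j^k)\le\nu(I_j^k)$.

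The main obstacle is really one of bookkeeping rather than of ideas: one must check that every constant produced by the reverse-Jensen step and by John--Nirenberg depends only on $[v^{1/m}]_{A_\infty}$, $[\nu]_{A_\infty}$ and the fixed parameter $a>2^n$, and is in particular uniform in $k$, $j$ and $l$. That uniformity is precisely what is needed so that, combined with the sparseness of $\Gamma$ supplied by the preceding lemma, the outer summation over $k,l$ and $I_j^k\in\Gamma_{l,k}$ converges and produces the desired weak-type bound.
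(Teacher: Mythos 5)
Your argument is correct and is essentially the argument the paper has in mind: the paper does not write out this proof but defers to \cite{LOP}, where the same two-step reasoning appears — on $E_k\cap I_j^k$ one has $v^{1/m}\le a^{k+1}\le a^{1-l}\langle v^{1/m}\rangle_{I_j^k}$, the $A_\infty$ property of $v^{1/m}$ (obtained from $\nu\in A_1$ and $\nu v^{1/m}\in A_\infty$) forces this set to have exponentially small relative Lebesgue measure, and the $A_\infty$ property of $\nu$ transfers that smallness to $\nu$-measure. Your bookkeeping of the constants (uniform in $k,j,l$, depending only on $[v^{1/m}]_{A_\infty}$, $[\nu]_{A_1}$ and $a$) is exactly what is needed.
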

We also have the following lemma.
\begin{lemma}\label{rh}
If $w_1 w_2 \in A_\infty$, then for any cube $Q$, we have 
\[
\langle w_1 w_2 \rangle_Q \le C([w_1 w_2]_{A_\infty}) \langle w_1 \rangle_Q \langle w_2 \rangle_{Q}.
\]
\end{lemma}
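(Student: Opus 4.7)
The plan is to use one of the standard equivalent formulations of the $A_\infty$ condition: a weight $w$ belongs to $A_\infty$ if and only if there exist constants $\alpha\in(0,1)$ and $\beta\in(0,1)$, depending only on $[w]_{A_\infty}$, such that for every cube $Q$ and every measurable $E\subset Q$ with $|E|\ge\alpha|Q|$ one has $w(E)\ge\beta\,w(Q)$. I would apply this with $w=w_1w_2$.

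The core idea is to isolate a substantial subset of $Q$ on which both factors are simultaneously controlled by their averages, and then use the $A_\infty$ non-concentration property to transfer the resulting $w_1w_2$-integral back to the whole cube. Pick a threshold $N>1$ large enough that $1-2/N\ge\alpha$, and set
\[
E=\bigl\{x\in Q:\,w_1(x)\le N\langle w_1\rangle_Q\text{ and }w_2(x)\le N\langle w_2\rangle_Q\bigr\}.
\]
Chebyshev's inequality applied to each factor separately yields $|E|\ge(1-2/N)|Q|\ge\alpha|Q|$, while on $E$ the pointwise bound $w_1w_2\le N^2\langle w_1\rangle_Q\langle w_2\rangle_Q$ gives
\[
(w_1w_2)(E)\le N^2\,\langle w_1\rangle_Q\langle w_2\rangle_Q\,|Q|.
\]

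Invoking the $A_\infty$ property of $w_1w_2$ then produces $(w_1w_2)(Q)\le\beta^{-1}(w_1w_2)(E)$, and combining with the previous estimate and dividing by $|Q|$ gives
\[
\langle w_1w_2\rangle_Q\le\frac{N^2}{\beta}\langle w_1\rangle_Q\langle w_2\rangle_Q,
\]
with $C=N^2/\beta$ depending only on $[w_1w_2]_{A_\infty}$, as required. The argument is short, and the one point requiring care is the quantitative dependence of $\alpha,\beta$ on $[w_1w_2]_{A_\infty}$; this is a classical consequence of the reverse Hölder inequality forced by $A_\infty$, and I would cite it rather than re-derive it. I do not foresee any genuine obstacle beyond this standard bookkeeping.
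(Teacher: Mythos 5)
Your proof is correct and follows essentially the same route as the paper: Chebyshev's inequality produces a subset $E\subset Q$ of proportional measure on which $w_1w_2$ is pointwise bounded by a multiple of $\langle w_1\rangle_Q\langle w_2\rangle_Q$, and the $A_\infty$ non-concentration property transfers the bound from $E$ back to $Q$. The paper simply fixes the threshold at $N=4$ (so $|E|\ge\frac12|Q|$) instead of adjusting $N$ to the $A_\infty$ parameter $\alpha$, a cosmetic difference.
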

\begin{proof}
let $E_1=\{x\in Q: w_1(x)>  4\langle w_1\rangle_{Q}\}$ and
$E_2=\{x\in Q: w_2(x)> 4\langle w_2\rangle_{Q}\}$. Then by Chebyshev, it is easy to see that $E:=Q\setminus (E_1\cup E_2)$ satisfies $|E|\ge \frac 12|Q|$.
Since $w_1w_2\in A_\infty$, we have
\begin{align*}
w_1w_2(Q)&\le c([w_1 w_2]_{A_\infty}) w_1w_2(E)\le 16 c([w_1 w_2]_{A_\infty}) \langle w_1\rangle_{Q} \langle w_2\rangle_{Q} |E| \\
&\le 16 c([w_1 w_2]_{A_\infty}) \langle w_1\rangle_{Q} \langle w_2\rangle_{Q} |Q|.
\end{align*}
\end{proof}
With this lemma, we can also obtain the exponential decay for $l <0$. 
\begin{lemma}
For $l< 0$ and $I_j^k\in \Gamma_{l,k}$, there exists a constant $c_1$ depending on $\nu,v$ such that
\[
  \nu(E_k\cap I_j^k)\le c_1  a^{l} \nu(I_j^k).
\]
\end{lemma}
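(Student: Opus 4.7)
The plan is to bound $\nu(E_k\cap I_j^k)$ directly by exploiting that on $E_k$ the weight $v^{1/m}$ has a pointwise lower bound of order $a^k$, combined with the factorization of the product $\nu v^{1/m}\in A_\infty$ provided by Lemma \ref{rh}. No fine $A_\infty$ reverse-Hölder exponent is needed, which is why the clean bound $a^l$ (rather than $a^{\delta l}$ for some small $\delta$) pops out.

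First, observe that on $E_k$ we have $v(x)>a^{mk}$, hence $v^{1/m}(x)>a^k$ pointwise on $E_k$. Trading this factor of $a^{-k}$ for the insertion of $v^{1/m}$ inside the integrand gives
\[
\nu(E_k\cap I_j^k)\le a^{-k}\int_{E_k\cap I_j^k}\nu\, v^{1/m}\,dx \le a^{-k}\int_{I_j^k}\nu\, v^{1/m}\,dx = a^{-k}|I_j^k|\,\langle \nu v^{1/m}\rangle_{I_j^k}.
\]
Now I would invoke Lemma \ref{rh} with $w_1=\nu$ and $w_2=v^{1/m}$, whose product lies in $A_\infty$ by hypothesis, to get $\langle \nu v^{1/m}\rangle_{I_j^k}\le C\,\langle \nu\rangle_{I_j^k}\,\langle v^{1/m}\rangle_{I_j^k}$. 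Since $I_j^k\in \mathcal{Q}_{l,k}$ yields $\langle v^{1/m}\rangle_{I_j^k}<a^{k+l+1}$, and $|I_j^k|\langle \nu\rangle_{I_j^k}=\nu(I_j^k)$, chaining the inequalities gives
\[
\nu(E_k\cap I_j^k)\le C\,a^{-k}\cdot a^{k+l+1}\,\nu(I_j^k)=c_1\,a^l\,\nu(I_j^k),
\]
with $c_1=Ca$, as desired.

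I do not anticipate any serious obstacle: the real input is Lemma \ref{rh} and the decomposition into families $\mathcal{Q}_{l,k}$, both already in place. Two observations are worth recording. The argument does not actually use $l<0$ or the refined selection $\Gamma_{l,k}\subset \mathcal{Q}_{l,k}$; the same $a^l$ bound holds for every integer $l$. It is only genuinely useful in the negative range, because for $l\ge 0$ one needs exponential decay in $l$ (as furnished by the previous lemma) to sum over $l$, while $a^l$ provides exactly the geometric decay needed when $l\to -\infty$.
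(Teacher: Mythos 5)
Your proof is correct and follows exactly the paper's argument: the pointwise bound $v^{1/m}>a^k$ on $E_k$ gives $\nu v^{1/m}(I_j^k)\ge a^k\nu(E_k\cap I_j^k)$, while Lemma \ref{rh} together with $\langle v^{1/m}\rangle_{I_j^k}<a^{k+l+1}$ gives $\nu v^{1/m}(I_j^k)\le C_{\nu,v}a^{k+l}\nu(I_j^k)$, and combining the two yields the claim. Your closing observation that the bound holds for all $l$ but is only useful for $l<0$ is also accurate.
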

\begin{proof}
By Lemma \ref{rh}, we have 
\[
\nu v^{\frac 1m}(I_j^k)\le   C_{\nu,v} \langle \nu\rangle_{I_j^k} \langle v^{\frac 1m}\rangle_{I_j^k} |I_j^k|\le C_{\nu,v} a^{k+l} \nu(I_j^k).
\]
On the other hand, 
\[
\nu v^{\frac 1m}(I_j^k) \ge a^k \nu(E_k \cap I_j^k).
\]
Therefore
\[
\nu(E_k\cap I_j^k)\le C_{\nu,v} a^{l} \nu(I_j^k).
\] 
\end{proof}
Now fix $l$, form the principal cubes for $\cup_{k\ge N}\Gamma_{l,k}$: let $\mathcal P_0^l$ be the maximal cubes in $\cup_{k\ge N}\Gamma_{l,k}$, then for $m\ge 0$, if $I_s^t\in \mathcal P_m^l$, we say $I_j^k\in \mathcal P_{m+1}^l$ if $I_j^k$ is maximal (in the sense of inclusion) in $ \mathcal D(I_s^t)$ such that
\[
\langle \nu \rangle_{I_{j}^k}> 2 \langle \nu \rangle_{I_s^t}
\]
Denote $\mathcal P^l=\cup_{m\ge 0}\mathcal P_m^l$ and $\pi(Q)$ is the minimal principal cube which contains $Q$. We have
\begin{align*}
\sum_k\sum_{l\in \mathbb Z}\sum_{I_j^k\in \Gamma_{l,k}}a^{k+1}\nu(E_k\cap I_j^k)
& \le \sum_{l\in \mathbb Z}c_1  e^{- c_2 |l|} a^{1-l} \sum_k\sum_{I_j^k\in \Gamma_{l,k}} \langle v^{\frac 1m}\rangle_{I_j^k} \nu(I_j^k)\\
&\le\sum_{l\in \mathbb Z}2c_1  e^{- c_2 |l|}a^{1-l} \sum_{I_s^t\in \mathcal P^l} \langle \nu\rangle_{I_s^t} \sum_{k,j:\pi(I_j^k)=I_s^t} v^{\frac 1m}(I_j^k)\\
&{\lesssim_n} \sum_{l\in \mathbb Z}c_1  e^{- c_2 |l|} a^{-l} [v^{\frac 1m}]_{A_\infty}\sum_{I_s^t\in \mathcal P^l} \langle \nu\rangle_{I_s^t} v^{\frac 1m}(I_s^t)\\
&= \sum_{l\in \mathbb Z}c_1  e^{- c_2 |l|} a^{-l} [v^{\frac 1m}]_{A_\infty}\int_{\mathbb R^n} v^{\frac 1m}\sum_{I_s^t\in \mathcal P^l} \langle \nu\rangle_{I_s^t}\chi^{}_{I_s^t}\\
&\lesssim \sum_{l\in \mathbb Z}c_1  e^{- c_2 |l|} a^{-l} [v^{\frac 1m}]_{A_\infty}[\nu]_{A_1}\sum_{Q\in \mathcal P^l_*} \nu v^{\frac 1m}(Q),
\end{align*}
where in the last step we have used the stopping criteria, i.e., 
\[
\sum_{I_s^t\in \mathcal P^l} \langle \nu\rangle_{I_s^t}\chi^{}_{I_s^t}\le 2[\nu]_{A_1} \nu(x), \,\, a.e. \,\,x\in \mathop\cup_{s,t: I_s^t\in \mathcal P^l} I_s^t
\] and  $\mathcal P^l_*$ is the collection of maximal cubes (in the sense of inclusion) in $\mathcal P^l$. For fixed $Q\in \mathcal P^l_*$, by Lemma \ref{rh}
\begin{align*}
\nu v^{\frac 1m}(Q)&\le   C_{\nu,v} \langle \nu\rangle_{Q} \langle v^{\frac 1m}\rangle_{Q} |Q| \lesssim_n a^{l}C_{\nu,v}  u(Q)\prod_{i=1}^m \langle f_i\rangle_{Q}^{\frac 1m}\\
&\le a^{l}C_{\nu,v}[\vec w]_{A_{\vec 1}}  \prod_{i=1}^m (\int_Q f_i w_i)^{\frac 1m}.
\end{align*}
By the disjointness of $Q$ and H\"older's inequality, we obtain 
\[
\sum_{Q\in \mathcal P^l_*} \nu v^{\frac 1m}(Q)\lesssim a^{l}C_{\nu,v}[\vec w]_{A_{\vec 1}} \prod_{i=1}^m \|f_i\|_{L^1(w_i)}^{\frac 1m},
\]
and we conclude the proof.

\section{A vector-valued extension of Theorem \ref{muczo}}

Recently in \cite{CMO} D. Carando, M. Mazzitelli and the second author obtained a generalization of the Marcinkiewicz-Zygmund inequalities to the context of multilinear operators. In the particular case of paraproducts, Marcinkiewicz-Zygmund inequalities were obtained by C. Benea and C. Muscalu in \cite{BM} and \cite{BM2}.  The results in \cite{CMO} extend the previous ones in \cite{GM} and \cite{BPV}. 

The following theorem is one of the results in \cite{CMO}.

\begin{theorem}[\cite{CMO}]\label{CMO}
Let $0<p, q_1, \dots, q_m < r<2$ or $r=2$ and $0<p, q_1, \dots, q_m < \infty$ and, for each $1 \leq i \leq m$, consider $\{f^i_{k_i}\}_{k_i} \subset L^{q_i}(\mu_i)$.  And Let $S$ be a multilinear operator such that  
 $S\colon L^{q_1}(\mu_1) \times \cdots \times L^{q_m}(\mu_m) \to L^{p,\infty}(\nu)$, then, there exists a constant $C>0$ such that 
 
 \begin{equation}\label{weak MZ multilineal p,q>0}
\left\| \left( \sum_{k_1, \dots, k_m} |S(f^1_{k_1}, \dots, f^m_{k_m})|^r \right)^{\frac{1}{r}}  \right\|_{L^{p, \infty}(\nu)} \leq C  \|S\|_{weak} \prod_{i=1}^m \left\| \left( \sum_{k_i} |f^i_{k_i}|^{r} \right)^{\frac{1}{r}} \right\|_{L^{q_i}(\mu_i)}.
\end{equation}

\end{theorem}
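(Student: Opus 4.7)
The plan is to establish the inequality \eqref{weak MZ multilineal p,q>0} by multilinear Gaussian randomization combined with a Kolmogorov-type reduction that lets one commute a probabilistic expectation with the weak-type quasi-norm $L^{p,\infty}(\nu)$. Introduce $m$ independent sequences $\{g^i_{k_i}\}_{k_i}$ of i.i.d.\ standard Gaussian random variables on a product probability space $(\Omega,\mathbb P)=\prod_{i=1}^m(\Omega_i,\mathbb P_i)$. By the Khintchine--Kahane inequalities, all $L^s(\mathbb P)$-norms ($0<s<\infty$) of $\sum_k g_k a_k$ are comparable to $(\sum_k|a_k|^2)^{1/2}$, and iterating this $m$ times for the tensor products $g^1_{k_1}\cdots g^m_{k_m}$ yields, pointwise in $x$,
$$\left(\sum_{\vec k}|S(f^1_{k_1},\dots,f^m_{k_m})(x)|^r\right)^{\!1/r}\sim_{r,m}\left(\mathbb E\,\Big|\sum_{\vec k} g^1_{k_1}\cdots g^m_{k_m}\,S(f^1_{k_1},\dots,f^m_{k_m})(x)\Big|^r\right)^{\!1/r}.$$
Setting $F^i(\omega_i):=\sum_{k_i} g^i_{k_i}(\omega_i)\,f^i_{k_i}$ and invoking the multilinearity of $S$, the inner sum reorganises as $S(F^1(\omega_1),\dots,F^m(\omega_m))(x)$.

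Taking $L^{p,\infty}(\nu)$-quasi-norms and applying Fubini, the task reduces to bounding $\|(\mathbb E\,|S(F^1,\dots,F^m)|^r)^{1/r}\|_{L^{p,\infty}(\nu)}$. To pull $\mathbb E$ outside the spatial quasi-norm, I would apply a Kolmogorov-type equivalence, $\|g\|_{L^{p,\infty}(\nu)}\sim\sup_{\nu(E)<\infty}\nu(E)^{1/p-1/p_0}\|g\chi_E\|_{L^{p_0}(\nu)}$, passing to a strong-type exponent $p_0$ chosen so that $r/p_0\ge 1$. The hypothesis supplies exactly the room needed: when $p<r<2$ one takes $p_0<p<r$, and when $r=2$ one takes $p_0\le\min(p,2)$ (the range $p>2$ being handled directly by vector-valued Minkowski into $\ell^2$). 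Minkowski's integral inequality then yields
$$\|(\mathbb E\,|S(F^1,\dots,F^m)|^r)^{1/r}\|_{L^{p,\infty}(\nu)}\lesssim\big(\mathbb E\,\|S(F^1,\dots,F^m)\|_{L^{p,\infty}(\nu)}^r\big)^{1/r}.$$
Plugging in the weak bound $\|S\|_{weak}$ pointwise in $\omega$, factorising the resulting expectation by independence of the $\omega_i$'s, and using Khintchine--Kahane once more to identify $(\mathbb E_{\omega_i}\|F^i(\omega_i)\|_{L^{q_i}(\mu_i)}^r)^{1/r}\sim\|(\sum_{k_i}|f^i_{k_i}|^r)^{1/r}\|_{L^{q_i}(\mu_i)}$ closes the chain of estimates with the claimed product constant.

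The principal obstacle is precisely this Kolmogorov/Minkowski exchange through $L^{p,\infty}(\nu)$: since this space is not Banach when $p\le 1$ and Minkowski fails outright, the descent to a strong-type $L^{p_0}$ with $p_0<p$ is essential, and it is this step that dictates the two permitted regimes of the hypothesis. Once this exchange is in place, the remaining ingredients --- multilinearity of $S$, Khintchine--Kahane for Gaussians, and Fubini --- are routine.
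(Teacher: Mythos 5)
This statement is imported verbatim from \cite{CMO}; the present paper gives no proof of it, so your attempt must be measured against the argument in that reference (which follows the Grafakos--Martell randomization scheme). Your global strategy --- randomize, use multilinearity to collapse the multi-indexed sum into $S(F^1,\dots,F^m)$, and then invoke the weak-type hypothesis --- is the right one, but two of your steps contain genuine errors.

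First, Gaussian randomization cannot produce the $\ell^r$ norm when $r<2$. The Khintchine--Kahane inequalities you invoke say that $\bigl(\mathbb E\,|\sum_k g_k a_k|^s\bigr)^{1/s}\sim\bigl(\sum_k|a_k|^2\bigr)^{1/2}$ for every $0<s<\infty$: the right-hand side is always the $\ell^2$ norm, whatever the moment $s$. Your displayed pointwise equivalence, with $\bigl(\sum_{\vec k}|\cdot|^r\bigr)^{1/r}$ on the left, is therefore false unless $r=2$. For $r<2$ one must use symmetric $r$-stable random variables $\theta_k$, for which $\sum_k\theta_k a_k$ is equidistributed with $\theta_1\bigl(\sum_k|a_k|^r\bigr)^{1/r}$; since $\mathbb E|\theta_1|^s<\infty$ only for $s<r$, this is exactly what forces the hypothesis $p,q_1,\dots,q_m<r$ in that regime (for $r=2$ all Gaussian moments are finite, which is why the exponents are then unrestricted). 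Your argument never uses, and cannot explain, the restriction $p,q_i<r$.

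Second, the Kolmogorov--Minkowski exchange you propose is internally inconsistent. Kolmogorov's characterization $\|g\|_{L^{p,\infty}(\nu)}\sim\sup_{0<\nu(E)<\infty}\nu(E)^{1/p-1/p_0}\|g\chi_E\|_{L^{p_0}(\nu)}$ requires $p_0<p$, hence $p_0<r$; but the Minkowski-type inequality $\bigl\|(\mathbb E|G|^r)^{1/r}\bigr\|_{L^{p_0}(\nu)}\le\bigl(\mathbb E\|G\|_{L^{p_0}(\nu)}^r\bigr)^{1/r}$ holds only when the inner exponent does not exceed the outer one, i.e.\ when $r\le p_0$. These two requirements cannot be met simultaneously, and your condition ``$r/p_0\ge1$'' is precisely the direction in which Minkowski fails. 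The correct mechanism --- the one used in \cite{CMO} --- never commutes norms: one uses the anti-concentration lower bound $\mathbb P\bigl(|\sum_k\theta_k a_k|>c(\sum_k|a_k|^r)^{1/r}\bigr)\ge\delta>0$, which shows that the superlevel set of the square function is contained in $\{x:\mathbb P_\omega(|S(F^1,\dots,F^m)(x)|>c\lambda)\ge\delta\}$; Chebyshev and Fubini then give $\nu(\{(\sum_{\vec k}|S(\dots)|^r)^{1/r}>\lambda\})\le\delta^{-1}\int_\Omega\nu(\{|S(F^1(\omega),\dots)|>c\lambda\})\,d\mathbb P$, to which the weak-type hypothesis applies pointwise in $\omega$; finally the moment bounds for the stable (or Gaussian) variables, valid exactly because $p,q_i<r$ (or $r=2$), control the resulting expectations by the product of the $\|(\sum_{k_i}|f^i_{k_i}|^r)^{1/r}\|_{L^{q_i}(\mu_i)}$, iterating over the $m$ independent probability spaces. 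Without these two corrections your argument does not close.
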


As a consequence of this theorem and Theorem \ref{main}  we obtain the following mixed weighted vector valued inequality for a multilinear Calder\'on-Zygmund operator $T$.

\begin{cor}\label{ext vec} 
Let $S(\vec{f})=\frac{T(\vec{f})}{v}$, where $T$ is a Calder\'on-Zygmund operator. Let $\vec{w}=(w_1,...,w_m) \in A_{\vec{1}}$ and $v\in RH_{\infty}$, or $w_1,...,w_m\in A_1$ and $v\in A_{\infty}$. Let $\nu = w_1^\frac{1}{m}...w_m^\frac{1}{m}$ and let $1<r\leq 2$.
For each $1 \leq i \leq m$, consider $\{f^i_{k_i}\}_{k_i} \subset L^{1}(w_i)$. Then, there exists a constant $C>0$ such that 

\begin{equation}\label{ext vect}
\left\| \left( \sum_{k_1, \dots, k_m} |S(f^1_{k_1}, \dots, f^m_{k_m})|^r \right)^{\frac{1}{r}}  \right\|_{L^{\frac{1}{m}, \infty}(\nu v^{\frac{1}{m}})} \leq C \prod_{i=1}^m \left\| \left( \sum_{k_i} |f^i_{k_i}|^{r} \right)^{\frac{1}{r}} \right\|_{L^{1}(w_i)}.
\end{equation}
\end{cor}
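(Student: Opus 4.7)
\medskip
\noindent\emph{Proof sketch of Corollary \ref{ext vec}.}

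\smallskip

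The plan is simply to combine the scalar mixed weak-type inequality for $T$ (i.e., Theorem \ref{muczo}) with the Marcinkiewicz--Zygmund type theorem recorded as Theorem \ref{CMO}. First I would verify that, under either of the two weight hypotheses stated in the corollary, the operator
\[
S(\vec f\,) \;=\; \frac{T(\vec f\,)}{v}
\]
is a bounded multilinear operator
\[
S:\; L^{1}(w_1) \times \cdots \times L^{1}(w_m) \;\longrightarrow\; L^{\frac{1}{m},\infty}(\nu\, v^{\frac{1}{m}}).
\]
The case $w_1,\dots,w_m\in A_1$, $v\in A_\infty$ is exactly Theorem \ref{muczo}. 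The case $\vec w\in A_{\vec 1}$, $v\in RH_\infty$ reduces to Theorem \ref{muczo} via Lemma \ref{pesos}: from $\vec w\in A_{\vec 1}$ one has $\nu\in A_1$, and combining $\nu\in A_1\subset A_\infty$ with $v^{1/m}\in RH_\infty$ (Lemma \ref{pesos} \eqref{d}) and Lemma \ref{pesos} \eqref{Ainf} yields $\nu v^{1/m}\in A_\infty$, which is precisely the hypothesis needed in Theorem \ref{muczo}.

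\smallskip

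Having identified $S$ as a bounded multilinear operator of the type required by Theorem \ref{CMO}, the second step is to invoke that theorem with the parameter choices
\[
p=\tfrac{1}{m}, \qquad q_1=\cdots=q_m=1, \qquad \mu_i = w_i\, dx, \qquad \text{and target measure } \nu\, v^{1/m}\, dx,
\]
together with the given $r\in(1,2]$. Since $p=\tfrac{1}{m}\le 1 < r$ and each $q_i=1<r$, the hypothesis $0<p,q_1,\dots,q_m<r$ (or the alternative case $r=2$) of Theorem \ref{CMO} is satisfied. Applying \eqref{weak MZ multilineal p,q>0} to $S$ then yields exactly the inequality \eqref{ext vect}, with constant $C\lesssim \|S\|_{\mathrm{weak}}$, where $\|S\|_{\mathrm{weak}}$ is controlled by the constant produced in Theorem \ref{muczo}.

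\smallskip

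There is essentially no obstacle beyond checking that the assumptions match; the corollary is a direct juxtaposition of the two theorems. The only point that requires a line of verification is the reduction of the $RH_\infty$ hypothesis on $v$ to the $A_\infty$ hypothesis on $\nu v^{1/m}$ needed by Theorem \ref{muczo}, which is handled by the chain of implications in Lemma \ref{pesos} recalled in the first paragraph above.
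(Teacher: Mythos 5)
Your proposal is correct and follows exactly the paper's route: establish that $S$ is bounded from $L^{1}(w_1)\times\cdots\times L^{1}(w_m)$ into $L^{\frac{1}{m},\infty}(\nu v^{\frac{1}{m}})$ via Theorem \ref{muczo} (with the $RH_\infty$ case reduced to the $\nu v^{1/m}\in A_\infty$ hypothesis through Lemma \ref{pesos}, just as the paper does for its Corollary in the introduction), and then apply Theorem \ref{CMO} with $p=\frac1m$, $q_i=1$. In fact you spell out the parameter verification more explicitly than the paper, which records only the one-line observation that $S$ satisfies the hypothesis of Theorem \ref{CMO}.
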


Observe that $S$ satisfies $S\colon L^{1}(w_1) \times \cdots \times L^{1}(w_m) \to L^{\frac{1}{m},\infty}(\nu v^{\frac{1}{m}})$, so we are under the hypothesis of Theorem \ref{CMO}.

\bigskip

\section{Appendix A. Proof of Theorem \ref{extension}}

First, as we mentioned before,  that if $\vec{w}=(w_1,...,w_m) \in A_{\vec{1}}$ then $\nu = w_1^\frac{1}{m}...w_m^\frac{1}{m}\in A_1$. 
We will follow the ideas of  \cite[Theorem 1.5]{OmPe}, and these ideas are based in previous one in \cite{CMP}. Define the operator $S$ by

$$Sf(x)=\frac{M(f\nu )(x)}{\nu (x)}$$
when $\nu (x) \neq 0$ and $Sf(x)=0$ when $\nu (x)=0$. (Since $\nu \in A_1$, $\nu > 0$ a.e.).

Since $\nu \in A_1$, $S$ is bounded on $L^{\infty }(\nu v^{\frac{1}{m}})$ with constant $C=[\nu ]_{A_1}$, that is, $$\| Sf\| _{L^{\infty }(\nu v^{\frac{1}{m}})} \leq [\nu ]_{A_1} \| f\| _{L^{\infty }(\nu v^{\frac{1}{m}})}.$$ We will now show that $S$ is bounded on $L^{p_0}(\nu v^{\frac{1}{m}})$ for some $1<p_0<\infty $. Observe that

$$\int_{\mathbb{R}_n} Sf(x)^{p_0} \ \nu (x) \ v^{\frac{1}{m}}(x) \ dx = \int_{\mathbb{R}_n} M(f\nu )(x)^{p_0} \ \nu (x) ^{1-p_0} \ v^{\frac{1}{m}}(x) \ dx.$$
Since $v^{1/m}\in A_{\infty}$, $v^{\frac{1}{m}}\in A_t$ for some $t>1$ large. Then by the $A_p$ factorization theorem there exist $v_1,v_2\in A_1$ such that $v^{\frac{1}{m}}=v_1v_2^{1-t}$; hence,
$$\nu^{1-p_0}v^\frac{1}{m}=v_1(\nu v_2^{\frac{t-1}{p_0-1}}).$$
By Lemma 2.3 in \cite{CMP} there exists $0<\e <1$, depending only on $[\nu]_{A_1}$, such that $\nu v_2^{\e} \in A_1$ for all $v_2\in A_1$ and $0<\e <\e_0$. Thus, if we let
$$p_0=\frac{2(t-1)}{\e_0}+1,$$
then $\nu v^{\frac{1}{m}}\in A_{p_0}$.

By Muckenhoupt's theorem, $M$ is bounded on $L^{p_0}(\nu ^{1-p_0} v^{\frac{1}{m}})$ and therefore $S$ is bounded on $L^{p_0}(\nu v^{\frac{1}{m}})$ with some constant $C_0$.
Thus by Marcinkiewicz interpolation in the scale of Lorentz spaces, $S$ is bounded on $L^{q,1}(\nu v^{\frac{1}{m}})$ for all $p_0<q<\infty $. In particular, by  \cite[Proposition A.1]{CMP},

$$\| Sf\| _{L^{q,1}(\nu v^{\frac{1}{m}})} \leq 2^{\frac{1}{q}} \Bigg( C_0 \Big(\frac{1}{p_0} - \frac{1}{q}\Big)+ C_1 \Bigg) \|f\|_{L^{q,1}(\nu v^{\frac{1}{m}})}. $$ Thus, for all $q\geq 2p_0$ we have that
$\| Sf\| _{L^{q,1}(\nu v^{\frac{1}{m}})} \leq K_0 \|f\|_{L^{q,1}(\nu v^{\frac{1}{m}})}$ with $K_0=4p_0 \big( C_0 + C_1 \big).$ We emphasize that the constant $K_0$ is valid for every $q\geq 2p_0$.

Again by \cite[Lemma 2.3]{CMP}, for every weight $W_1 \in A_1$ with $[W_1]_{A_1}\leq 2K_0$ there exists $0<\tilde{\e_0} <1$ (that depends only on $K_0$) such that $W_1 W_2^{\e} \in A_1$ for all $W_2 \in A_1$ and $0<\e < \tilde{\e_0}$.

Fix $0<\e < min\{ \tilde{\e_0}, \frac{1}{2p_0} \}$ and let $r=(\frac{1}{\e})'$. Then $r'>2p_0$ and so $S$ is bounded on $L^{r',1}(\nu v^{\frac{1}{m}})$ with constant bounded by $K_0$. Now apply the Rubio de Francia algorithm to define the operator $\mathcal{R}$ on $h\in L^{r',1}(\nu v^{\frac{1}{m}})$, $h\geq 0$, by $$\mathcal{R} h(x)=\sum_{k=0}^{\infty} {\frac{S^kh(x)}{2^kK_0^k}}.$$

It follows immediately from this definition that:

\begin{itemize}
\item $h(x) \leq \mathcal{R} h(x)$;
\item $\| \mathcal{R} h\| _{L^{r',1}(\nu v^{\frac{1}{m}})} \leq 2 \| h\|_{L^{r',1}(\nu v^{\frac{1}{m}})}$;
\item $S(\mathcal{R} h)(x)\leq 2 K_0 \mathcal{R} h(x)$.
\end{itemize}
In particular, it follows from the last item and the definition of $S$ that $\mathcal{R} h \ \nu \in A_1$ with $[\mathcal{R} h \ \nu]_{A_1} \leq 2K_0.$ Let $W_1=\mathcal{R} h \ \nu$ and $W_2=v_1\in A_1$. Then $W_1W_2^{\e}\in A_1$. Hence, $\mathcal{R} h \ \nu \ v^{\frac{1}{mr'}} \in A_1 \subset A_{\infty}$.

Then,
\begin{equation*}
\begin{split}
\Big\| \frac{T(\vec{f})}{v}\Big\|_{L^{\frac{1}{m}, \infty}(\nu v^\frac{1}{m})}^\frac{1}{mr}
& = \sup_{\lambda>0} \ \lambda^{\frac{1}{mr}} \Big(\nu v^\frac{1}{m} \ \{ x\in {\mathbb{R}}^n:\Big|\frac{T(\vec{f)}(x)}{v(x)}\Big|>\lambda \}\Big)^{\frac{1}{r}}\\
& = \sup_{\lambda>0} \ \lambda^{\frac{1}{mr}} \Big(\nu v^\frac{1}{m} \ \{ x\in {\mathbb{R}}^n:\Big|\frac{T(\vec{f)}(x)}{v(x)}\Big|^{\frac{1}{mr}}>\lambda^{\frac{1}{mr}} \}\Big)^{\frac{1}{r}} \\
& = \sup_{t>0} \ t \ \Big(\nu v^\frac{1}{m} \ \{ x\in {\mathbb{R}}^n:\Big|\frac{T(\vec{f)}(x)}{v(x)}\Big|^{\frac{1}{mr}}>t\}\Big)^{\frac{1}{r}} \\
& = \Big\| \Big(\frac{T(\vec{f})}{v}\Big)^\frac{1}{mr}\Big\|_{L^{r, \infty}(\nu v^\frac{1}{m})}\\
& = \sup_{h \in L^{r',1}(\nu v^\frac{1}{m}) \ : \ \|h\|_{L^{r',1}(\nu v^\frac{1}{m})}=1} \Big| \int_{{\mathbb{R}}^n} \Big| \frac{T(\vec{f})(x)}{v(x)} \Big|^{\frac{1}{mr}} \ h(x) \ \nu(x) \ v^\frac{1}{m}(x) \ dx \Big| \\
& = \sup_{h \in L^{r',1}(\nu v^\frac{1}{m}) \ : \ \|h\|_{L^{r',1}(\nu v^\frac{1}{m})}=1} \Big| \int_{{\mathbb{R}}^n} |T(\vec{f})(x)|^{\frac{1}{mr}} \ h(x) \ \nu(x) \ v^\frac{1}{mr'}(x) \ dx \Big|.
\end{split}
\end{equation*}
Before finishing we recall the following fact, which was proved in \cite{LOPTT} (see Corollary 3.8 there), if $w\in A_{\infty}$  and $s>0$ then 

$$ \int_{{\mathbb{R}}^n} |T(\vec{f})(x)|^{s} \ w(x)dx \leq C \int_{{\mathbb{R}}^n} \mathcal{M}(\vec{f})(x)^{s} \ w(x) dx. $$

From the definition of $\mathcal{R}h(x)$, the last inequality and H\"older's inequality

\begin{equation*}
\begin{split}
\int_{{\mathbb{R}}^n} |T(\vec{f})(x)|^{\frac{1}{mr}} \ h(x) \ \nu(x) \ v^\frac{1}{mr'}(x) \ dx
& \leq \int_{{\mathbb{R}}^n} |T(\vec{f})(x)|^{\frac{1}{mr}} \ \mathcal{R}h(x) \ \nu(x) \ v^\frac{1}{mr'}(x) \ dx \\
& \leq \int_{{\mathbb{R}}^n} \mathcal{M}(\vec{f})(x)^{\frac{1}{mr}} \ \mathcal{R}h(x) \ \nu(x) \ v^\frac{1}{mr'}(x) \ dx \\
& = C \int_{{\mathbb{R}}^n} \Big( \frac{\mathcal{M}(\vec{f})(x)}{v(x)} \Big)^{\frac{1}{mr}} \ \mathcal{R}h(x) \ \nu(x) \ v^\frac{1}{m}(x) \ dx \\
& \leq C \ \Big\| \Big(\frac{\mathcal{M}(\vec{f})}{v}\Big)^\frac{1}{mr} \Big\|_{L^{r, \infty}(\nu v^\frac{1}{m})} \ \Big\| \mathcal{R}h \Big\|_{L^{r', 1}(\nu v^\frac{1}{m})} \\
& \leq 2 \ C \ \Big\| \frac{\mathcal{M}(\vec{f})}{v} \Big\|_{L^{\frac{1}{m}, \infty}(\nu v^\frac{1}{m})}^\frac{1}{mr} \ \Big\| h \Big\|_{L^{r', 1}(\nu v^\frac{1}{m})} \\
& = 2 \ C \ \Big\| \frac{\mathcal{M}(\vec{f})}{v} \Big\|_{L^{\frac{1}{m}, \infty}(\nu v^\frac{1}{m})}^\frac{1}{mr}.
\end{split}
\end{equation*}
\\

So we have that
$$\Big\| \frac{T(\vec{f})}{v}\Big\|_{L^{\frac{1}{m}, \infty}(\nu v^\frac{1}{m})}^\frac{1}{mr} \leq C \ \Big\| \frac{\mathcal{M}(\vec{f})}{v}\Big\|_{L^{\frac{1}{m}, \infty}(\nu v^\frac{1}{m})}^\frac{1}{mr}.$$


\begin{thebibliography}{2}


\bibitem{BM}
C. Benea and C. Muscalu,
\emph{Multiple vector valued inequalities via the helicoidal method}. Anal. PDE, 9: 1931--1988, 2016.

\bibitem{BM2}
C. Benea and C. Muscalu,
\emph{Quasi-Banach valued inequalities via the helicoidal method}. Preprint, arXiv: 1609.01090.

\bibitem{BPV}
F. Bombal, D. P\'erez-Garc\'ia and I. Villanueva, 
\emph{ Multilinear extensions of Grothendieck's theorem.} Quart. J. Math., 55(4): 441--450, 2004.



\bibitem{CMO}
D. Carando, M. Mazzitelli and S. Ombrosi, \emph{Multilinear Marcinkiewicz-Zygmund inequalities}, arXiv:1611.08284 (2016).

\bibitem{CoMe1}
R.R. Coifman and Y. Meyer, \emph{On commutators of singular integrals and bilinear singular integrals},
Trans. Amer. Math. Soc. 212 (1975), 315Ð331.

\bibitem{CoMe2}
 R.R. Coifman and Y. Meyer, \emph{Commutateurs dÕint\'egrales singulires et op\'erateurs multil\'ineaires}, Ann. Inst. Fourier (Grenoble) 28 (1978), no. 3, 177Ð202.




\bibitem{CMP}
D. Cruz-Uribe, J.M. Martell and C. P\'erez, \emph{Weighted weak-type inequalities and a conjecture of Sawyer} Int. Math. Res. Not., 30 (2005), 1849-1871.



\bibitem{CUN}
D. Cruz-Uribe and C. J. Neugebauer, \emph{The structure of the reverse Hšlder classes} Trans. Amer. Math. Soc. 347 (1995), 2941-2960 


\bibitem{GCRF}
J. Garc\'ia-Cuerva and J.L. Rubio de Francia, 
\emph{Weighted norm inequalities and related topics}. North-Holland Math. Stud. 116, North-Holland, 1985.


\bibitem{GM}
L. Grafakos and J.M. Martell,
\emph{Extrapolation of weighted norm inequalities for multivariable operators and applications}. J. Geom. Anal., 14(1): 19--46, 2004.

\bibitem{GraTor1}
 L. Grafakos and R.H. Torres, \emph{Multilinear Calder\'on-Zygmund theory}, Adv. Math. 165 (2002), no.1, 124Ð164.

\bibitem{GraTor2}
 L. Grafakos and R.H. Torres,  \emph{Maximal operator and weighted norm inequalities for multilinear
singular integrals}, Indiana Univ. Math. J., 51 (2002), no. 5, 1261Ð1276.




\bibitem{LOPTT}
A. Lerner, S. Ombrosi, C. P\'erez, R. H. Torres and R. Trujillo-Gonzales, \emph{New maximal functions and multiple weights for the multilinear Calder\'on-Zygmund theory} Adv. Math., 220 (2009) no. 4, 1222-1264.

\bibitem{LOP}
K. Li, S. Ombrosi and C. P\'erez, \emph{Proof of an extension of E. Sawyer's conjecture about weighted mixed weak-type estimates}, arXiv:1703.01530 (2017).

\bibitem{OmPe}  S. Ombrosi and C. P\'erez,
{\em Mixed weak type estimates: Examples and counterexamples related to  a problem of E. Sawyer,} Colloquium Mathematicum,  {145} (2016), 259-272.

\bibitem{OPR}
S. Ombrosi, C. P\'erez and J. Recchi, {\em Quantitative weighted mixed weak-type inequalities for classical operators}, Indiana Univ. Math. J., 65(2016) 615--640.


\bibitem{PerTor}
C. P\'erez and R.H. Torres, \emph{Sharp maximal function estimates for multilinear singular integrals},
Contemp. Math., 320 (2003), 323Ð331.

\bibitem{S}
E. Sawyer, \emph{A weighted weak type inequality for the maximal function} Proc. Amer. Math. Soc. 93 (1985), 610-614.

\end{thebibliography}
\end{document}